\documentclass[pdflatex,sn-mathphys-num]{sn-jnl}

\usepackage{graphicx}%
\usepackage{multirow}%
\usepackage{amsmath,amssymb,amsfonts}%
\usepackage{amsthm}%
\usepackage{mathrsfs}%
\usepackage[title]{appendix}%
\usepackage{xcolor}%
\usepackage{textcomp}%
\usepackage{manyfoot}%
\usepackage{booktabs}%
\usepackage{algorithm}%
\usepackage{algorithmic}
\usepackage{listings}%
\usepackage{siunitx}%
\usepackage{tcolorbox}%
\usepackage{caption}%
\usepackage{bm}%
\usepackage{hyperref}
\newcommand{\RETURN}{\STATE \textbf{return} }

\tcbuselibrary{breakable, skins}

\newtcolorbox[auto counter, number within=section]{breakablealgorithm}[2][]{
    breakable,
    enhanced,
    colback=white,
    colframe=blue!70!black,
    colbacktitle=blue!30!white,
    coltitle=black,
    fonttitle=\bfseries,
    title=Algorithm \thetcbcounter: #2,
    #1
}

\theoremstyle{thmstyleone}%
\newtheorem{theorem}{Theorem}%

\theoremstyle{thmstyletwo}%
\newtheorem{remark}{Remark}%

\theoremstyle{thmstylethree}%

\begin{document}

\title[Arnoldi-Enhanced Multivariate Hermite Interpolation of Manifold-Valued Data]
{Arnoldi enhanced multivariate Hermite interpolation of manifold-valued data}

\author[1,2]{\fnm{Yuxuan} \sur{Li}} \email{Yuxuan.Li2502@student.xjtlu.edu.cn}

\author*[1]{\fnm{Qiang} \sur{Niu}}\email{Qiang.Niu@xjtlu.edu.cn}

\author[1]{\fnm{Wubin} \sur{Zhou}} \email{Wubin.Zhou@xjtlu.edu.cn}

\affil[1]{\orgdiv{School of Mathematics and Physics}, \orgname{Xi'an Jiaotong-Liverpool University}, \orgaddress{\street{Suzhou}, \postcode{215123}, \state{Jiangsu}, \country{P. R. China}}}

\affil[2]{\orgdiv{Department of Mathematical Sciences}, \orgname{University of Liverpool}, \orgaddress{\street{Liverpool}, \postcode{L69 7ZX}, \country{United Kingdom}}}

\abstract{This paper presents a robust enhancement of the Tangent space Hermite Interpolation (THI) method for manifold-valued data by integrating the multivariate Arnoldi process. To circumvent the inherent numerical instability of multivariate confluent Vandermonde matrices, we use a $G$-Arnoldi-based recurrence to construct a discrete orthogonal polynomial basis directly on the tangent space. The method generates better numerical conditioning for high-order approximations. We analyze the convergence rates for both $C^0$ and $C^1$ errors in the multivariate setting. When only function values are used, the $C^0$ approximation error decays as $\mathcal{O}\left(\sqrt{M} n^{-m}\right)$. For the $C^1$ error without derivative data, the rate becomes $\mathcal{O}\left(\sqrt{M} h^{-1} n^{-m}\right)$, where $h$ is the fill distance of the sampling set. When derivative data are additionally available, the $C^1$ error becomes $\mathcal{O}\left(\sqrt{M} n^{-(m-1)}\right)$. In all cases, $n$ is the polynomial degree, $m$ denotes the regularity of the target function, and $M$ is the number of sampling points. Importantly, as $n$ increases, the required number of points $M$ must also increase. These results reveal the interplay among approximation order, sampling density ($M$), fill distance ($h$), dimension ($d$), and the regularity ($m$) of the target function. Extensive numerical experiments conducted on the special orthogonal group $SO(3)$ and the unit sphere $S^2$ show that the Arnoldi-enhanced THI method outperforms the Kriging-based approaches in terms of both computational efficiency and accuracy.}

\keywords{Arnoldi, Hermite interpolation, manifold-valued data, $SO(3)$, $S^2$
\newline \textbf{AMS subject classifications (MSC2010):} 65D05, 65D15, 49Q99, 41A29, 53B50}
\maketitle

\section{Introduction}\label{sec1}

In this paper, we consider the multivariate Hermite interpolation of manifold-valued data \cite{Narcowich1995Generalized,JSC2,JSC1}. Let $D \subset \mathbb{R}^d$ be a parameter domain, $\mathcal{M}$ a Riemannian manifold, and consider a  manifold-valued differentiable function
$$
f: D \rightarrow \mathcal{M}, \quad \bm{\omega} \mapsto f(\bm{\omega}).
$$
Consider a sample data set consisting of $k$ parameter locations $\bm{\omega}_1, \ldots, \bm{\omega}_k \in D$ with corresponding function values (manifold locations) and partial derivatives (tangent vectors)
$$
p_j=f\left(\bm{\omega}_j\right) \in \mathcal{M}, v_j^i:=\partial_i f\left(\bm{\omega}_j\right)=\left.\frac{\mathrm{d}}{\mathrm{~d} t}\right|_{t=0} f\left(\bm{\omega}_j+t e_i\right)=\mathrm{d} f\left(\bm{\omega}_j\right)\left[e_i\right] \in T_{f\left(\bm{\omega}_j\right)} \mathcal{M},
$$
where $e_1, \ldots, e_d$ denotes the orthogonal basis of unit vectors in $\mathbb{R}^d$. The Hermite manifold interpolation problem is formalized as follows:
Find a differentiable, manifold-valued function $\hat{f}: D \rightarrow \mathcal{M}$ such that
$$
\begin{aligned}
\hat{f}\left(\bm{\omega}_j\right) & =p_j \in \mathcal{M}, \quad j=1, \ldots, k \\
\partial_i \hat{f}\left(\bm{\omega}_j\right) & =v_j^i \in T_{p_j} \mathcal{M}, \quad j=1, \ldots, k ;\ i=1, \ldots, d.
\end{aligned}
$$
The Hermite interpolation has numerous applications
in robotics \cite{BML,PR95,Vardi2023Geometric}, computer vision \cite{CV}, model reduction \cite{Amsallem2008Interpolation,Amsallem2011Online,Seguin2022Continuation} and image processing \cite{MI,MI2,Wallner2005Convergence,Absil2016Bezier,Vardi2023Geometric}. The tangent space Hermite interpolation (THI) method \cite{JSC1} is a straightforward method to conduct such interpolation, it fixes a base point $q_0\in\mathcal{M}$ and map all data to the tangent space $T_{q_0}\mathcal{M}$ by $w_j=\log _{q_0}\left(p_j\right) \in T_{q_0} \mathcal{M}$ and $\hat{v}_j^i=d\left(\log _{q_0}\right)_{p_j}\left[v_j^i\right] \in T_{q_0} \mathcal{M}$. The base point $q_0$ is typically chosen as the Riemannian barycenter (Karcher mean) of the sample points $\left\{p_j\right\}$. This choice minimizes the sum of squared distances to the samples, keeps the tangent vectors $w_j=\log _{q_0}\left(p_j\right)$ small in norm, and ensures (under suitable sampling density) that all points lie within the injectivity radius. The procedure guarantees that the logarithm map is well-defined. Then the standard Hermite interpolation is performed on the tangent space and finally map the result back to $\mathcal{M}$ by the exponential map.

For multivariate Hermite interpolation problem \cite{MULTIARNOLDI,simultaneous,JSC1}, one seeks a polynomial $p(\boldsymbol{x}) = \sum_{j=1}^g c_j \phi_j(\boldsymbol{x})$ from a chosen basis $\{\phi_j\}$ that minimizes the discrete $\ell_2$-error at a given set of nodes $\mathcal{X} = \{\boldsymbol{x}_i\}_{i=1}^m \subset \mathbb{R}^d$, with the residual possibly including both function values and partial derivatives up to some order. The most natural choice for the basis is the monomials $\boldsymbol{x}^{\boldsymbol{\alpha}}$, ordered by total degree, which leads to the classical multivariate Vandermonde matrix whose columns are the evaluations of the monomials at $\mathcal{X}$, and when derivatives are present, the resulting discrete data becomes a confluent Vandermonde matrix whose entries also contain the evaluations of the derivatives of the monomials; the coefficient vector $\boldsymbol{c}$ can be obtained by the solution of $V\boldsymbol{c} \approx \boldsymbol{f}$, where $\boldsymbol{f}$ contains the observed function values and derivative data. 

A widely recognized difficulty is the ill-conditioning of Vandermonde matrices \cite{Golub1996Matrix,Ciaramella2026Gentle}. Even for univariate problems, the condition number of $V$ grows exponentially with the polynomial degree. In the multivariate setting, near-linear dependence of monomials renders the standard basis numerically unstable. To circumvent the difficulty, Brubeck, Nakatsukasa, and Trefethen rediscovered the Vandermonde with Arnoldi (V+A) method \cite{SIAM}, which avoids explicit construction of $V$ and instead employs the Arnoldi process to generate a new basis of discrete orthogonal polynomials whose evaluation matrix $Q$ has orthogonal columns and spans the same space as $V$ \cite{GraggReichel1987,GustafssonPS2009,Stylianopoulos2013}. V+A transforms the least-squares problem into $Q\boldsymbol{c} \approx \boldsymbol{f}$ with a perfectly conditioned coefficient matrix, while the Arnoldi process produces an upper triangular matrix $\widetilde{R}$ encoding a recurrence for efficient evaluation of the polynomial and its derivatives at new points. 

The success of V+A inspired important extensions, for example \cite{confluent, Faghih2025,VanBuggenhout2023} and \cite{MULTIARNOLDI}. The univariate confluent V+A \cite{confluent} incorporates derivative information by constructing a Krylov subspace that captures the structure of the confluent Vandermonde matrix, enabling stable Hermite interpolation using both function values and derivatives. The paper \cite{MULTIARNOLDI} further unifies these ideas into the multivariate confluent V+A with $G$-Arnoldi (MV+G-A), which handles multivariate function values and higher-order partial derivatives within a single least-squares formulation. The central innovation is the introduction of an application-dependent $G$-inner product $\langle \boldsymbol{x}, \boldsymbol{y} \rangle_G = \boldsymbol{x}^{\mathrm{T}} G \boldsymbol{y}$ into the Gram–Schmidt step of the Arnoldi process, where the positive semi-definite matrix $G$ is chosen according to the available data: for example, on a 2-dimensional domain, $m_0$ interior nodes provide only function values and $m_1$ boundary nodes provide both function values and normal derivatives, one constructs a selection matrix $L$ that extracts exactly the observed quantities from the full evaluation matrix (which contains function values and all partial derivatives at all nodes), and then sets $G = L^{\mathrm{T}} L$, where 
\[
L = \left[\begin{array}{c|cc|cc}
I_m & & & & \\
\hline & 0_{m_1 \times m_0} & I_{m_1} & & \\
\hline & & & 0_{m_1 \times m_0} & I_{m_1}
\end{array}\right] \in \mathbb{R}^{(m+2m_1) \times 3m}.
\]
The Arnoldi process then performs a node- and derivative-specific orthogonalization, and generate a basis orthogonal with respect to the $G$-inner product and a well conditioned least-squares coefficient matrix $A = L Q$, the resulting linear system $A \boldsymbol{c} = \boldsymbol{b}$ is therefore perfectly conditioned and can be solved by a single matrix‑vector multiplication $\boldsymbol{c} = A^{\mathsf{T}} \boldsymbol{b}$. And the resulting polynomial is represented in a basis of discrete $G$-orthogonal polynomials satisfying an explicit recurrence encoded in $\widetilde{R}$, allowing function values and prescribed partial derivatives at new nodes to be computed efficiently without matrix inversion. The flexibility of MV+G-A is demonstrated on multivariate Hermite least-squares problems with scattered derivative data and on solving linear PDEs with various boundary conditions on irregular domains, where in all cases the coefficient matrix remains perfectly conditioned and the procedure involves only matrix-vector multiplications. In summary, the multivariate confluent Vandermonde with $G$-Arnoldi provides a numerically robust, computationally efficient and flexible tool for multivariate Hermite approximation.

In this paper, we revisit the THI method \cite{JSC1} in Section~\ref{sec2}, show that it can be reformulated as a Hermite interpolation problem and the original approach relies on Gradient-enhanced kriging \cite{kriging1}, we adopt the multivariate Arnoldi method \cite{MULTIARNOLDI} instead. Then in Section~\ref{sec:multivariate_arnoldi} we briefly review this algorithm. In Section~\ref{sec4}, we derive the $C^0$ error bound for the multivariate Arnoldi approximation, demonstrating that the $C^0$ error decays exponentially with the total degree of the polynomial basis but increases with the total number of sampling points, and the total number of sampling points depends on the total degree of the polynomial basis. The estimation for $C^1$ error is also given in Section~\ref{sec43}, we show that whether or not derivative sample data is provided, the derivatives can still be approximated, only the rate of convergence differs. These provide a theoretical basis for why the multivariate Arnoldi method generates better fitting results than Kriging in the following numerical examples. Section~\ref{sec5} simply recalls error propagation estimates on manifolds. Finally, Section~\ref{sec6} presents the numerical experiments to illustrate the performance of the proposed approach.

\section{Tangent space Hermitian interpolation(THI) method}
\label{sec2}
 
The THI method was first proposed in \cite{JSC1}. Instead of working directly on the curved manifold $\mathcal{M}$, we fix a base point $q_0\in\mathcal{M}$ which is chosen as the Riemannian barycenter (Karcher mean) of the sample points $\left\{p_j\right\}$ and map all data to the tangent space $T_{q_0}\mathcal{M}$, which is a vector space. Then the standard Hermite interpolation can be performed on the tangent space and finally map the result back to $\mathcal{M}$ via the exponential map.

Given sample points $\bm{\omega}_j\in\mathbb{R}^d$, function values $p_j = f(\bm{\omega}_j)\in\mathcal{M}$, and partial derivatives $v_j^i = \partial_i f(\bm{\omega}_j)\in T_{p_j}\mathcal{M}$, then \cite{Absil2008Optimization,Fletcher2013Geodesic}:

- Points are mapped to the tangent space:  
  $w_j = \log_{q_0}(p_j) \in T_{q_0}\mathcal{M}$. To avoid confusion, one should distinguish $w$ and $\bm{\omega}$.

- Derivatives are transported via the differential of the logarithm map:  
  $\hat{v}_j^i = d(\log_{q_0})_{p_j}[v_j^i] \in T_{q_0}\mathcal{M}$.  
  
  In practice it can be approximated by finite differences:  
  $$\hat{v}_j^i \approx \frac{\log_{q_0}\bigl(\operatorname{Exp}_{p_j}(\Delta t\,v_j^i)\bigr) - \log_{q_0}\bigl(\operatorname{Exp}_{p_j}(-\Delta t\,v_j^i)\bigr)}{2\Delta t}.$$

\subsection{Interpolation in the tangent space} \label{kriging}
We construct an interpolant $\hat{f}_{\text{tan}} :\mathbb{R}^d\to T_{q_0}\mathcal{M}$ satisfying  
$\hat{f}_{\text{tan}}(\bm{\omega}_j)=w_j$ and $\partial_i\hat{f}_{\text{tan}}(\bm{\omega}_j)=\hat{v}_j^i$.  
Using scalar weight functions, we write  
$$\hat{f}_{\text{tan}}(\bm{\omega}) = \sum_{j=1}^k \phi_j(\bm{\omega}) w_j + \sum_{i=1}^d\sum_{l=1}^k \psi_{i,l}(\bm{\omega})\,\hat{v}_l^i,$$
where the $\phi_j$ and $\psi_{i,l}$ are scalar functions on $\mathbb{R}^d$ that satisfy the Hermite conditions  
$$\phi_j(\bm{\omega}_l)=\delta_{jl},\quad \partial_i\phi_j(\bm{\omega}_l)=0,\qquad
\psi_{i,l}(\bm{\omega}_m)=0,\quad \partial_{i'}\psi_{i,l}(\bm{\omega}_m)=\delta_{ii'}\delta_{lm}.$$
If we write $\vec{\varphi}_j:=\left(\varphi_j\left(\bm{\omega}_1\right), \ldots, \varphi_j\left(\bm{\omega}_k\right)\right)$ and $\vec{\psi}_{i, l}:=\left(\psi_{i, l}\left(\bm{\omega}_1\right), \ldots, \psi_{i, l}\left(\bm{\omega}_k\right)\right)$. Then the Hermite condition can be written in the form of $$
\left(\begin{array}{c}
\vec{\varphi}_j \\
\partial_1 \vec{\varphi}_j \\
\vdots \\
\partial_d \vec{\varphi}_j
\end{array}\right)=\left(\begin{array}{c}
e_j \\
\mathbf{0} \\
\vdots \\
\mathbf{0}
\end{array}\right), \quad\left(\begin{array}{c}
\vec{\psi}_{1, l} \\
\partial_1 \vec{\psi}_{1, l} \\
\vdots \\
\partial_d \vec{\psi}_{1, l}
\end{array}\right)=\left(\begin{array}{c}
\mathbf{0} \\
e_l \\
\vdots \\
\mathbf{0}
\end{array}\right), \ldots,\left(\begin{array}{c}
\vec{\psi}_{d, l} \\
\partial_1 \vec{\psi}_{d, l} \\
\vdots \\
\partial_d \vec{\psi}_{d, l}
\end{array}\right)=\left(\begin{array}{c}
\mathbf{0} \\
\mathbf{0} \\
\vdots \\
e_l
\end{array}\right) \in \mathbb{R}^{k(d+1)} .
$$

In \cite{JSC1}, the weight functions are constructed by Gradient-enhanced Kriging method \cite{kriging1}, which assumes that the unknown function is a realization of a Gaussian process with a prescribed correlation kernel. The kernel defines the covariance between any two function values, as well as between function values and derivatives, and between derivatives themselves. Using the kernel, a linear system is built from the observed data (values and derivatives). The solution gives a predictor that is a linear combination of the data, with coefficients determined by the correlation structure. 

In this paper, we propose Arnoldi‑enhanced method, which follows the same linearisation strategy as THI: a base point $q_0\in\mathcal{M}$ is fixed, and all manifold data are pulled back to the tangent space $T_{q_0}\mathcal{M}\cong\mathbb{R}^m$ via $w_j=\operatorname{Log}_{q_0}(p_j)$ and $\hat{v}_j^i=d\left(\log _{q_0}\right)_{p_j}\left[v_j^i\right] \in T_{q_0} \mathcal{M}$. The resulting $\mathbf{w}:\mathbb{R}^d\to\mathbb{R}^m$ is then directly approximated componentwise: for each coordinate $c=1,\dots,m$ we construct a multivariate polynomial $P^{c}(\bm{\omega})$ to approximate, so $\mathbf{w}$ could be approximated by $\mathbf{\widehat{w}}((\bm{\omega}))=(P^{1}(\bm{\omega}),\cdots,P^{m}(\bm{\omega}))$. By multivariate Arnoldi, $P^{c}(x)$ could be written as $\sum_{j=1}^g c_j^{[c]}\,\xi_j(\bm{\omega})$ where the basis $\{\xi_j\}_{j=1}^g$ can be shared by all components. At a new point $\bm{\omega}^*$, the basis values can be accurately generated from multivariate Arnoldi process, and the final interpolant on $\mathcal{M}$ can be reconstructed as $\widehat{f}(\bm{\omega}^*) = \operatorname{Exp}_{q_0}\!\bigl(\widehat{\mathbf{w}}(\bm{\omega}^*)\bigr)$.

\section{Multivariate Arnoldi process for polynomial approximation}
\label{sec:multivariate_arnoldi}

In this section, we review the multivariate Arnoldi process \cite{MULTIARNOLDI} for constructing a discrete orthogonal polynomial basis on a given set of nodes $\mathcal{X} = \{\boldsymbol{x}_j\}_{j=1}^m \subset \mathbb{R}^d$. This process is a key component of the multivariate confluent Vandermonde with Arnoldi (MV+GA) method, which enables stable least-squares polynomial approximation using function values and partial derivatives. Throughout this paper, we denote the set of multivariate monomials of total degree $\le n$ as $\mathcal{P}_{d, n}$ or $\mathbb{P}_n^{d, \text { tol }} $.

Let $\{\varphi_i(\boldsymbol{x})\}_{i=1}^g$ be the set of multivariate monomials of total degree $\le n$, ordered according to the graded reverse lexicographic (grevlex) ordering. The evaluation matrix
\[
V = \big[\varphi_i(\boldsymbol{x}_j)\big]_{j=1,\,i=1}^{m,\;g} \in \mathbb{R}^{m\times g}
\]
is a multivariate Vandermonde matrix, which becomes extremely ill-conditioned as $n$ increases. The goal is to construct a new basis $\{\xi_i(\boldsymbol{x})\}_{i=1}^g$ such that the corresponding evaluation matrix $Q = [\xi_i(\boldsymbol{x}_j)]$ is orthogonal with respect to a user-defined inner product, i.e.,
\[
Q^{\mathsf{T}} G Q = I_g,
\]
where $G$ is a positive semidefinite matrix determined by the application. 
Moreover, the basis $\{\xi_i\}$ should admit an explicit recurrence for efficient evaluation at new nodes.

In many applications, we have access to both function values and partial derivatives of the target function at the sample nodes. To incorporate the  information of derivatives, we consider an extended basis that stacks the function and its derivatives. Taking the sampling information up to second order derivative as an example, for a polynomial $p(\boldsymbol{x})$, define the vector of its function value and partial derivatives up to second order:
$$p^{(2)}(\boldsymbol{x}) = \begin{bmatrix} p(\boldsymbol{x}),& \partial_1 p(\boldsymbol{x}),&\cdots,&\partial_d p(\boldsymbol{x}),&\partial_{1,1} p(\boldsymbol{x}),&\cdots,&\partial_{d,d} p(\boldsymbol{x}) \end{bmatrix}^{\top} \in \mathbb{R}^{\widetilde{d}},$$
where $\widetilde{d} = 1 + d + d(d+1)/2$.

Given the monomial basis $\{\varphi_i(\boldsymbol{x})\}_{i=1}^g$, we define the extended basis
$$
\varphi_i^{(2)}(\boldsymbol{x})=\left[\begin{array}{lllllll}
\varphi_i(\boldsymbol{x}),&\partial_1 \varphi_i(\boldsymbol{x}),&\cdots,&\partial_d \varphi_i(\boldsymbol{x}),&\partial_{1,1} \varphi_i(\boldsymbol{x}),&\cdots,&\partial_{d, d} \varphi_i(\boldsymbol{x})
\end{array}\right]^{\top} \in \mathbb{R}^{\widetilde{d}} .
$$

For a set of nodes $\mathcal{X} = \{\boldsymbol{x}_j\}_{j=1}^m$, the multivariate confluent Vandermonde matrix can be constructed \cite{MULTIARNOLDI}
\[
V^{(2)} := \begin{bmatrix}
\boldsymbol{\varphi}_1 & \boldsymbol{\varphi}_2 & \ldots & \boldsymbol{\varphi}_g \\
\hline
\partial_1 \boldsymbol{\varphi}_1 & \partial_1 \boldsymbol{\varphi}_2 & \ldots & \partial_1 \boldsymbol{\varphi}_g \\
\vdots & \vdots & \vdots & \vdots \\
\partial_d \boldsymbol{\varphi}_1 & \partial_d \boldsymbol{\varphi}_2 & \ldots & \partial_d \boldsymbol{\varphi}_g \\
\hline
\partial_{1,1} \boldsymbol{\varphi}_1 & \partial_{1,1} \boldsymbol{\varphi}_2 & \ldots & \partial_{1,1} \boldsymbol{\varphi}_g \\
\vdots & \vdots & \vdots & \vdots \\
\partial_{d,d} \boldsymbol{\varphi}_1 & \partial_{d,d} \boldsymbol{\varphi}_2 & \ldots & \partial_{d,d} \boldsymbol{\varphi}_g
\end{bmatrix}
=:
\begin{array}{c}
m \\
md \\
m(d+1)/2
\end{array}
\left[\begin{array}{c}
V_0 \\ V_1 \\ V_2
\end{array}\right] \in \mathbb{R}^{m\widetilde{d} \times g},
\] 
where each $\boldsymbol{\varphi}_i = [\varphi_i(\boldsymbol{x}_1), \ldots, \varphi_i(\boldsymbol{x}_m)]^{\mathsf{T}} \in \mathbb{R}^m$ is the vector of function evaluations at the $m$ nodes, and similarly $\partial_k \varphi_i$, $\partial_{k,\ell} \varphi_i$ are the vectors of first and second partial derivative evaluations.

Under the grevlex ordering, for any $i\ge 2$, there exists a unique integer $s_i < i$ (we choose the smallest) and a coordinate index $u_i \in \{1,\dots,d\}$ such that
\[
\varphi_i(\boldsymbol{x}) = x_{u_i}\,\varphi_{s_i}(\boldsymbol{x}).
\]
Furthermore, for any $j \le s_i-1$, we have
\[
x_{u_i}\varphi_j \in \operatorname{span}\{\varphi_1,\dots,\varphi_{i-1}\}.
\]
These properties are essential for the Arnoldi recurrence.

Using the relation $\varphi_i(\boldsymbol{x}) = x_{u_i}\varphi_{s_i}(\boldsymbol{x})$, we can derive recurrence relations for the extended basis $\varphi_i^{(2)}$. For the function values and partial derivatives, we have
\begin{equation}\label{recurrence}
\begin{aligned}
\varphi_i(\boldsymbol{x}) &= x_{u_i}\varphi_{s_i}(\boldsymbol{x}),\\[2pt]
\partial_j\varphi_i(\boldsymbol{x}) &= \delta_{u_i,j}\,\varphi_{s_i}(\boldsymbol{x}) + x_{u_i}\,\partial_j\varphi_{s_i}(\boldsymbol{x}),\\[2pt]
\partial_{j,k}\varphi_i(\boldsymbol{x}) &= \delta_{u_i,j}\,\partial_k\varphi_{s_i}(\boldsymbol{x}) + \delta_{u_i,k}\,\partial_j\varphi_{s_i}(\boldsymbol{x}) + x_{u_i}\,\partial_{j,k}\varphi_{s_i}(\boldsymbol{x}),
\end{aligned}
\end{equation}
where $\delta_{u_i,j}$ is the Kronecker delta. These relations can be written compactly as
\[
\varphi_i^{(2)}(\boldsymbol{x}) = X_{u_i}^{(2)} \varphi_{s_i}^{(2)}(\boldsymbol{x}),
\]
where $X_{u_i}^{(2)} \in \mathbb{R}^{\widetilde{d}\times \widetilde{d}}$ is a lower triangular matrix whose diagonal entries are all $x_{u_i}$. 

For example, for $d = 3$, $u_i = 3$, $s_i = 2$, we have
$$
\boldsymbol{\varphi}_i^{(2)} =
\begin{bmatrix}
\phi_i \\[2pt]
\partial_1 \varphi_i \\[2pt]
\partial_2 \varphi_i \\[2pt]
\partial_3 \varphi_i \\[2pt]
\partial_{1,1} \varphi_i \\[2pt]
\partial_{1,2} \varphi_i \\[2pt]
\partial_{1,3} \varphi_i \\[2pt]
\partial_{2,2} \varphi_i \\[2pt]
\partial_{2,3} \varphi_i \\[2pt]
\partial_{3,3} \varphi_i
\end{bmatrix}
=
\mathbf{X}_{u_i}^{(2)} \,
\begin{bmatrix}
\phi_2 \\[2pt]
\partial_1 \varphi_2 \\[2pt]
\partial_2 \varphi_2 \\[2pt]
\partial_3 \varphi_2 \\[2pt]
\partial_{1,1} \varphi_2 \\[2pt]
\partial_{1,2} \varphi_2 \\[2pt]
\partial_{1,3} \varphi_2 \\[2pt]
\partial_{2,2} \varphi_2 \\[2pt]
\partial_{2,3} \varphi_2 \\[2pt]
\partial_{3,3} \varphi_2
\end{bmatrix}
= \mathbf{X}_{u_i}^{(2)} \, \boldsymbol{\varphi}_{s_i}^{(2)},
$$
where $\mathbf{X}_{u_i}^{(2)} \in \mathbb{R}^{10 \times 10}$ is the lower‑triangular matrix given by
$$
\mathbf{X}_{3}^{(2)} =
\begin{pmatrix}
x_3 & 0 & 0 & 0 & 0 & 0 & 0 & 0 & 0 & 0 \\
0   & x_3 & 0 & 0 & 0 & 0 & 0 & 0 & 0 & 0 \\
0   & 0   & x_3 & 0 & 0 & 0 & 0 & 0 & 0 & 0 \\
1   & 0   & 0   & x_3 & 0 & 0 & 0 & 0 & 0 & 0 \\
0   & 0   & 0   & 0 & x_3 & 0 & 0 & 0 & 0 & 0 \\
0   & 0   & 0   & 0 & 0   & x_3 & 0 & 0 & 0 & 0 \\
0   & 1   & 0   & 0 & 0   & 0   & x_3 & 0 & 0 & 0 \\
0   & 0   & 0   & 0 & 0   & 0   & 0   & x_3 & 0 & 0 \\
0   & 0   & 1   & 0 & 0   & 0   & 0   & 0   & x_3 & 0 \\
0   & 0   & 0   & 2 & 0   & 0   & 0   & 0   & 0   & x_3
\end{pmatrix}.
$$

\subsection{The $G$-Arnoldi process}

Define the $G$-inner product on $\mathbb{R}^{m\widetilde{d}}$ by
\begin{equation}
\langle \boldsymbol{y},\boldsymbol{z} \rangle_G = \boldsymbol{y}^{\mathsf{T}} G \boldsymbol{z},\qquad 
G\in\mathbb{R}^{m\widetilde{d}\times m\widetilde{d}} \text{ positive semidefinite}.
\end{equation}
The corresponding norm is $\|\boldsymbol{y}\|_G = \sqrt{\langle\boldsymbol{y},\boldsymbol{y}\rangle_G}$.

Let $\boldsymbol{v}_i = \varphi_i^{(2)}(\mathcal{X})$ be the evaluation vector of the $i$-th monomial and its partial derivatives at the node set $\mathcal{X}$, and let $V^{(2)} = [\boldsymbol{v}_1,\dots,\boldsymbol{v}_g]$ be the multivariate confluent Vandermonde matrix. 
The $G$-Arnoldi process computes an orthogonal basis $\{\boldsymbol{q}_i\}_{i=1}^g$ for the column space of $V^{(2)}$ via the Gram--Schmidt procedure
\begin{equation}\label{GS}
\boldsymbol{q}_1 = \frac{\boldsymbol{v}_1}{\|\boldsymbol{v}_1\|_G},\qquad
\boldsymbol{q}_i = \frac{\boldsymbol{v}_i - \sum_{k=1}^{i-1} \boldsymbol{q}_k \langle\boldsymbol{v}_i,\boldsymbol{q}_k\rangle_G}
                      {\bigl\|\boldsymbol{v}_i - \sum_{k=1}^{i-1} \boldsymbol{q}_k \langle\boldsymbol{v}_i,\boldsymbol{q}_k\rangle_G\bigr\|_G},\quad i\ge2. 
\end{equation}

Direct use of (\ref{GS}) is numerically unstable because the vectors $\boldsymbol{v}_i$ form an extremely ill-conditioned matrix. 
The multivariate confluent Vandermonde with G-Arnoldi method developed in \cite{MULTIARNOLDI} circumvents the difficulty by showing that the vector $\boldsymbol{q}_i$ obtained from \eqref{GS} is identical to the vector produced by
\begin{equation}
\widetilde{\boldsymbol{q}}_1=\boldsymbol{q}_1; \, \widetilde{\boldsymbol{q}}_i = \frac{\boldsymbol{X}_{u_i}^{(2)}\widetilde{\boldsymbol{q}}_{s_i} 
                                 - \sum_{k=1}^{i-1} \widetilde{\boldsymbol{q}}_k \langle \boldsymbol{X}_{u_i}^{(2)}\widetilde{\boldsymbol{q}}_{s_i}, \widetilde{\boldsymbol{q}}_k \rangle_G}
                                {\bigl\|\boldsymbol{X}_{u_i}^{(2)}\widetilde{\boldsymbol{q}}_{s_i} 
                                 - \sum_{k=1}^{i-1} \widetilde{\boldsymbol{q}}_k \langle \boldsymbol{X}_{u_i}^{(2)}\widetilde{\boldsymbol{q}}_{s_i}, \widetilde{\boldsymbol{q}}_k \rangle_G\bigr\|_G}. 
\end{equation}
Consequently, one can construct the basis without ever forming the $\boldsymbol{v}_i$. 
Instead it sets
\begin{equation}
\boldsymbol{k}_1 = \boldsymbol{q}_1,\qquad 
\boldsymbol{k}_i = \boldsymbol{X}_{u_i}^{(2)}\boldsymbol{q}_{s_i}\;\;(i\ge2), 
\end{equation}
and then orthogonalizes the sequence $\{\boldsymbol{k}_i\}$ against the previously computed $\boldsymbol{q}_k$ using the $G$-inner product.

Let $Q^{(2)} = [\boldsymbol{q}_1,\dots,\boldsymbol{q}_g]$ and $K^{(2)} = [\boldsymbol{k}_1,\dots,\boldsymbol{k}_g]$. 
The Gram--Schmidt procedure applied to $\{\boldsymbol{k}_i\}$ corresponds to the $G$-QR decomposition
\begin{equation}
K^{(2)} = Q^{(2)}\widetilde{R}, 
\end{equation}
where the upper-triangular matrix $\widetilde{R}\in\mathbb{R}^{g\times g}$ has entries
\[
\widetilde{R}_{k,i} = \langle \boldsymbol{k}_i, \boldsymbol{q}_k \rangle_G \;\; (k < i),\qquad
\widetilde{R}_{i,i} = \Bigl\| \boldsymbol{k}_i - \sum_{k=1}^{i-1} \boldsymbol{q}_k \widetilde{R}_{k,i} \Bigr\|_G .
\]
The matrix $\widetilde{R}$ encodes the recurrence coefficients of the discrete $G$-orthogonal polynomial basis $\{\xi_i^{(2)}\}$ associated with the columns of $Q^{(2)}$. 
These basis functions satisfy the three-term recurrence
\begin{equation}
\widetilde{R}_{i,i}\,\xi_i^{(2)}(\boldsymbol{x}) = X_{u_i}\xi_{s_i}^{(2)}(\boldsymbol{x}) - \sum_{k=1}^{i-1} \widetilde{R}_{k,i}\,\xi_k^{(2)}(\boldsymbol{x}), 
\end{equation}
which is the foundation of the evaluation stage. 
Because $Q^{(2)}$ is orthogonal with respect to the $G$-inner product ($(Q^{(2)})^{\mathsf{T}} G Q^{(2)} = I$), the least-squares problem for the coefficients of the approximating polynomial is well conditioned.

In summary, the $G$-Arnoldi process replaces the ill-conditioned monomial basis by a well-conditioned discrete $G$-orthogonal basis. 
The substitution $\boldsymbol{v}_i \to \boldsymbol{k}_i = \boldsymbol{X}_{u_i}^{(2)}\boldsymbol{q}_{s_i}$ (justified by Theorem~3.1 in \cite{MULTIARNOLDI}.) avoids any explicit use of the monomials, while the $G$-QR decomposition $K^{(2)} = Q^{(2)}\widetilde{R}$ supplies both the orthogonalization coefficients for the fitting stage and the recurrence relation for fast evaluation.

\subsection{The Multivariate confluent Vandermonde with $G$-Arnoldi Method.}

Algorithm~\ref{mvga} presents the fitting stage of the multivariate confluent Vandermonde with $G$-Arnoldi. It computes the orthogonal basis matrix $Q^{(2)}$ and the upper triangular matrix $\widetilde{R}$ such that  
\[
K_t^{(2)} = Q^{(2)}\widetilde{R},
\]
where $\boldsymbol{k}_1 = \boldsymbol{q}_1$ and $\boldsymbol{k}_i = \boldsymbol{X}_{u_i}^{(2)}\boldsymbol{q}_{s_i}$ for $i\ge2$. For simplicity, we denote $\{1, \ldots, d\}$ as $[d]$.
\let\RETURN\undefined
\begin{algorithm}[H]
\caption{MV+GA(F): Fitting stage of multivariate confluent Vandermonde with $G$-Arnoldi}  
\begin{algorithmic}[1] \label{mvga}
\REQUIRE $\mathcal{X} = \{\boldsymbol{x}_j\}_{j=1}^m \subset \mathbb{R}^d$, and the grevlex ordering basis $\{\varphi_i\}_{i=1}^g$ of $\mathbb{P}_n^{d,\mathrm{tol}}$.
\ENSURE $Q^{(2)} \in \mathbb{R}^{m\tilde{d} \times t}$, $\widetilde{R} \in \mathbb{R}^{t \times t}$.
\STATE $Q^{(2)} \leftarrow 0_{m\tilde{d} \times g}$, $\widetilde{R} \leftarrow 0_{g \times g}$, $t \leftarrow g$.
\STATE $\bigl[Q^{(2)}\bigr]_{:,1} \leftarrow \boldsymbol{e} = [1,\ldots,1]^{\mathsf{T}} \in \mathbb{R}^{m\tilde{d}}$.
\STATE $[\widetilde{R}]_{1,1} \leftarrow \|\boldsymbol{e}\|_G$.
\STATE $\bigl[Q^{(2)}\bigr]_{:,1} \leftarrow \bigl[Q^{(2)}\bigr]_{:,1} / [\widetilde{R}]_{1,1}$.
\FOR{$i = 2$ to $g$}
    \STATE pick the smallest $s_i \in [d]$ such that $\exists u_i \in [d]$ satisfying $\varphi_i = x_{u_i} \varphi_{s_i}$.
    \STATE $\boldsymbol{q}_i \leftarrow \mathbf{X}_{u_i}^{(2)} \bigl[Q^{(2)}\bigr]_{:,s_i}$.
    \FOR{$r = 1$ to $2$} 
        \STATE \COMMENT{re-orthogonalization}
        \STATE $\boldsymbol{s} \leftarrow \bigl\langle \bigl[Q^{(2)}\bigr]_{:,1:i-1},\; \boldsymbol{q}_i \bigr\rangle_G$.
        \STATE $\boldsymbol{q}_i \leftarrow \boldsymbol{q}_i - \bigl[Q^{(2)}\bigr]_{:,1:i-1} \boldsymbol{s}$.
        \STATE $[\widetilde{R}]_{1:i-1,i} \leftarrow [\widetilde{R}]_{1:i-1,i} + \boldsymbol{s}$.
    \ENDFOR
    \IF{$\|\boldsymbol{q}_i\|_G = 0$}
        \STATE $t \leftarrow i-1$ and breakdown.
    \ELSE
        \STATE $[\widetilde{R}]_{i,i} \leftarrow \|\boldsymbol{q}_i\|_G$.
        \STATE $\bigl[Q^{(2)}\bigr]_{:,i} \leftarrow \boldsymbol{q}_i / [\widetilde{R}]_{i,i}$.
    \ENDIF
\ENDFOR
\RETURN $Q^{(2)}$, $\widetilde{R}$
\end{algorithmic}  
\end{algorithm}

Once $\widetilde{R}$ is computed, the discrete orthogonal basis $\{\xi_i^{(2)}\}$ satisfies the recurrence
\[
\widetilde{R}_{i,i}\,\xi_i^{(2)}(\boldsymbol{x}) = X_{u_i}^{(2)}\xi_{s_i}^{(2)}(\boldsymbol{x}) - \sum_{k=1}^{i-1} \widetilde{R}_{k,i}\,\xi_k^{(2)}(\boldsymbol{x}),\quad i\ge2,
\]
which allows evaluation at any new node $\boldsymbol{s}$.
 
The evaluation stage of the MV+G-A method computes the values of the discrete $G$-orthogonal basis functions $\{\xi_i^{(2)}\}_{i=1}^g$ at a set of new points $\mathcal{S}=\{\boldsymbol{s}_1,\dots,\boldsymbol{s}_{\widehat m}\}\subset\mathbb{R}^d$ using the precomputed upper-triangular matrix $\widetilde{R}\in\mathbb{R}^{g\times g}$ and the index pairs $(u_i,s_i)$ from the grevlex ordering. For a single new point $\boldsymbol{s}$, the recurrence reads $$\xi_i^{(2)}(\boldsymbol{s})=\frac{1}{\widetilde{R}_{i,i}}\bigl(X_{u_i}\xi_{s_i}^{(2)}(\boldsymbol{s})-\sum_{k=1}^{i-1}\widetilde{R}_{k,i}\xi_k^{(2)}(\boldsymbol{s})\bigr)$$ for $i\ge2$, with $\xi_1^{(2)}(\boldsymbol{s})=1/\widetilde{R}_{1,1}$ (constant, all derivatives zero). Here $X_{u_i}$ is the operator that multiplies a function by the coordinate $x_{u_i}$ and simultaneously updates all its partial derivatives according to (\ref{recurrence}).

To apply this operator to the whole set $\mathcal{S}$ at once, we stack the $\tilde d=1+d+d(d+1)/2$ components of each point into a long vector; let $\mathbf{V}\in\mathbb{R}^{\widehat m\tilde d}$ be the stacked vector of $\xi_{s_i}^{(2)}(\mathcal{S})$. Because $X_{u_i}$ acts independently on each point, its matrix representation is block diagonal: $$\mathbf{S}_{u_i}^{(2)}=\operatorname{blockdiag}\bigl(X_{u_i}^{(2)}(\boldsymbol{s}_1),\dots,X_{u_i}^{(2)}(\boldsymbol{s}_{\widehat m})\bigr),$$ where each block $X_{u_i}^{(2)}(\boldsymbol{s}_j)$ is the $\tilde d\times\tilde d$ matrix that implements the rules (\ref{recurrence}) at point $\boldsymbol{s}_j$ using its coordinate $s_{j,u_i}$. Hence $\mathbf{W}=\mathbf{S}_{u_i}^{(2)}\mathbf{V}$ is exactly the stacked vector of $X_{u_i}\xi_{s_i}^{(2)}$ evaluated at all new points. In Algorithm~\ref{mvga_eval}, line~4 computes $\mathbf{w}\leftarrow\mathbf{S}_{u_i}^{(2)}[E]_{:,s_i}$ (where $[E]_{:,s_i}$ is the column storing $\xi_{s_i}^{(2)}(\mathcal{S})$), then lines~5-6 perform the orthogonalisation $\mathbf{w}\leftarrow\mathbf{w}-\sum_{k=1}^{i-1}[E]_{:,k}\widetilde{R}_{k,i}$ and normalisation $[E]_{:,i}\leftarrow\mathbf{w}/\widetilde{R}_{i,i}$ to obtain the $i$-th basis function at all new points. Thus $\mathbf{S}_{u_i}^{(2)}$ is the efficient, pointwise extension of $X_{u_i}$ that avoids constructing the large block-diagonal matrix explicitly; it is the essential first step in the recurrence that builds every $\xi_i^{(2)}$ from previously computed ones, enabling stable and fast evaluation of the orthogonal basis (and subsequently the polynomial approximant and its derivatives) at arbitrary new points.

\begin{algorithm}
\caption{MV+G-A(E): Evaluation stage of the Multivariate confluent Vandermonde with $G$-Arnoldi}
\begin{algorithmic}[1]\label{mvga_eval}
\REQUIRE $\{\boldsymbol{s}_j\}_{j=1}^{\widehat{m}} \subset \mathbb{R}^d$, the grevlex ordering basis $\{\varphi_i\}_{i=1}^g$ of $\mathbb{P}_n^{d,\mathrm{tol}}$, and $\widetilde{R} \in \mathbb{R}^{t \times t}$ from Algorithm3.1.
\ENSURE $E \in \mathbb{R}^{\widehat{m}\widetilde{d} \times t}$.
\STATE $[E]_{:,1} \leftarrow 1 / [\widetilde{R}]_{1,1}$.
\FOR{$i = 2$ to $t$}
    \STATE pick the smallest $s_i \in [d]$ such that $\exists u_i \in [d]$ satisfying $\varphi_i = x_{u_i} \varphi_{s_i}$.
    \STATE $\boldsymbol{w} \leftarrow \mathbf{S}_{u_i}^{(2)} [E]_{:,s_i}$.
    \STATE $\boldsymbol{w} \leftarrow \boldsymbol{w} - [E]_{:,1:i-1} [\widetilde{R}]_{1:i-1,i}$.
    \STATE $[E]_{:,i} \leftarrow \boldsymbol{w} / [\widetilde{R}]_{i,i}$.
\ENDFOR
\RETURN $E$.
\end{algorithmic}
\end{algorithm}

Finally, for a polynomial $p(\boldsymbol{x}) = \sum_{i=1}^g c_i \xi_i(\boldsymbol{x})$, the coefficients $\boldsymbol{c}\in\mathbb{R}^g$ are obtained by solving the orthogonal least-squares problem
\[
\boldsymbol{c} = \arg\min_{\boldsymbol{c}} \| \mathbf{A}\boldsymbol{c} - \boldsymbol{b} \|_2,\qquad \mathbf{A}^{\mathsf{T}}\mathbf{A}=I,
\]
where $\mathbf{A}$ is formed by selecting appropriate rows of $Q^{(2)}$ according to the application (which is $LQ^{(2)}$), and for example, when $L=I$ we could write $c=Q^{\top} b$. The approximation of $p$ and its partial derivatives up to second order at the new nodes $\mathcal{S}$ is then given by the recurrence in Algorithm~\ref{mvga_eval}.

\section{The error analysis of multivariate Arnoldi}
\label{sec4}
In this section, we derive the $C^0, C^1$ error bound for the multivariate Arnoldi approximation, demonstrating that the $C^0$ error decays exponentially with the total degree of the polynomial basis but increases with the total number of sampling points, and the total number of sampling points depends on the total degree of the polynomial basis. The estimation for $C^1$ error is also given, we show that it costs more total order of the polynomial than the $C^0$ case. Besides, in Section~\ref{sec43} we will show that whether or not derivative sample data are provided, the derivatives can still be approximated, only the rate of convergence differs. Throughout this section, we assume the function we want to approximate is in $C^m$ with $m$ large enough, actually in our experiment, we choose the smooth function.

We first discretize the domain in $\mathbb{R}^d$ with grid size $h$ defined by $\delta(\bar{\Omega}, A)=\sup _{x \in \Omega} \operatorname{dist}(x, A)$ where $A$ is the sample points set, and then we could get the polynomial approximation by solving the following least square problem
by multivariate Arnoldi
$$
\sum_{i=1}^m|v\left(x_i\right)-p(x_i)|^2+\sum_{1 \leq|\alpha| \leq k} \sum_{i=1}^m\left|D^\alpha( v(x_i)-p(x_i))\right|^2.
$$
Besides, recall the definition of $G$ and $L$, if we have all the data we need on every sampling points, we could choose $G=L=I$ in Section~\ref{sec:multivariate_arnoldi}. Since the $G$ and $L$ doesn't affect the core idea of our error analysis, we assume $G=L=I$ without loss of generality.
 
To conduct the error analysis of multivariate Arnoldi, we will first introduce several critical theorems. The first is the sampling inequality (\ref{controlsampling2}) showing that continuous Sobolev norm could be controlled by sampling Sobolev norm, which is comprehensively studied in \cite{arcangeli2007extension,NM}. The second is the simultaneous approximation inequality (\ref{simulinequal}) in \cite{simultaneous} showing that there exists a multivariate polynomial of total degree at most $n$ that can simultaneously approximate a $C^m$ function and its derivatives up to order $\min (n, m)$.

\begin{theorem}(\textbf{Theorem~4.1 in \cite{arcangeli2007extension}})
Let $\Omega$ be a bounded $\mathcal{L}[\rho, \theta]$-domain in $\mathbb{R}^d$ for some $\rho>0$ and $\theta \in(0, \pi / 2]$. Here, the expression " $\mathcal{L}[\rho, \theta]$-domain" is a shorthand for a bounded domain with a Lipschitz-continuous boundary that satisfies the cone property with radius $\rho$ and angle $\theta$. Explicitly, for every $x \in \Omega$ there exists a unit vector $\xi(x)$ such that the cone
$$
C(x, \xi(x), \theta, \rho)=\{x+h \eta:|\eta|=1, \eta \cdot \xi(x) \geq \cos \theta, 0 \leq h \leq \rho\}
$$
is contained in $\Omega$. Suppose that $p, q, \varkappa$, and $r$ satisfy $p, \varkappa \in[1, \infty]$, $q \in[1, \infty]$ and $$r \in \begin{cases}{[d, \infty),} & \text { if } p=1 \\ (d / p, \infty), & \text { if } 1<p<\infty \\ \mathbb{N}^*, & \text { if } p=\infty\end{cases},$$
let $\gamma=\max \{p, q, \varkappa\}$ and let $\ell$ be given by
$$
\ell= \begin{cases}l_0, & \text { if } r \in \mathbb{N}^* \text { and either } p<q<\infty \text { and } l_0 \in \mathbb{N}, \text { or } \\ & (p, q)=(1, \infty), \text { or } p=q, \\ \left\lceil l_0\right\rceil-1, & \text { otherwise },\end{cases}
$$
where $l_0=r-d(1 / p-1 / q)_{+}$. Then, there exist two positive constants $\mathfrak{d}$ (dependent on $\theta, \rho, n$ and $r$ ) and $\mathfrak{C}$ (dependent on $\Omega, n, r, p, q$ and $\varkappa$ ) such that the following property holds: for any finite set $A \subset \bar{\Omega}$ (or $A \subset \Omega$ if $p=1$ and $r=n$ ) such that $\delta(\bar{\Omega}, A) \leq \mathfrak{d}$, for any $u \in W^{r, p}(\Omega)$ and for any $s=0, \ldots, \ell$,
\begin{equation}\label{controlsampling2}
|u|_{s, q, \Omega} \leq \mathfrak{C} \left(h^{r-s-d(1 / p-1 / q)_{+}}|u|_{r, p, \Omega}+h^{d / \gamma-s}\left\|\left.u\right|_A\right\|_\varkappa\right).
\end{equation}
where $h=\delta(\bar{\Omega}, A)$ is defined by $\delta(\bar{\Omega}, A)=\sup _{x \in \Omega} \operatorname{dist}(x, A)$, and $\left\|\left.u\right|_A\right\|_\varkappa$ is given by $$\left\|\left.v\right|_A\right\|_\varkappa= \begin{cases}\left(\sum_{a \in A}|v(a)|^\varkappa\right)^{1 / \varkappa}, & \text { if } \varkappa<\infty, \\ \max _{a \in A}|v(a)|, & \text { if } \varkappa=\infty.\end{cases}$$and$$
|v|_{j, \infty, \Omega}=\max _{|\alpha|=j} \underset{x \in \Omega}{\operatorname{ess}} \sup _{x \in \Omega}\left|\partial^\alpha v(x)\right|,
$$
which is equal to the standard supremum if $v$ is smooth enough.
\end{theorem}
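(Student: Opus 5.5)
The statement is the sampling inequality of Arcangeli--López de Silanes--Torrens, quoted verbatim from \cite{arcangeli2007extension}. Within this paper it is used as a black box. If I had to prove it, I would use the standard local-polynomial-reproduction route to sampling inequalities, which I outline below.

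\textbf{Step 1: local cover.} Using the cone property with parameters $\rho,\theta$, cover $\Omega$ by a family of balls or cones $\{T_j\}$ of radius comparable to $h$ (times a constant depending on $\theta$). I would require:
\begin{itemize}
\item each $T_j$ to be star-shaped with respect to a ball of comparable radius;
\item the cover to have bounded overlap, with the overlap constant depending only on $d,\theta$.
\end{itemize}
The condition $h\le\mathfrak{d}$ is what guarantees that each $T_j$ contains a subset $A_j=A\cap T_j$ which is unisolvent for $\mathbb{P}_{r-1}$, and on which a stable norming inequality holds:
\[
\|p\|_{L^\infty(T_j)}\le 2\,\max_{a\in A_j}|p(a)|\qquad \text{for all } p\in\mathbb{P}_{r-1}.
\]
This is proved by a Markov-inequality argument. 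The constant $\mathfrak{d}$ is chosen small relative to $\rho$ and $1/r^2$.

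\textbf{Step 2: local estimate.} On each $T_j$, let $\pi_j u$ be the averaged Taylor polynomial of degree $r-1$. The Bramble--Hilbert / Sobolev--Taylor estimate, combined with Sobolev embedding (valid because $r>d/p$, or $r\ge d$ when $p=1$), gives for $s\le\ell$
\[
|u-\pi_j u|_{s,q,T_j}\lesssim h^{\,r-s-d(1/p-1/q)}\,|u|_{r,p,T_j},
\qquad
\|u-\pi_j u\|_{L^\infty(T_j)}\lesssim h^{\,r-d/p}\,|u|_{r,p,T_j}.
\]
For the polynomial part, apply a Markov/inverse inequality to pass from $|\pi_j u|_{s,q,T_j}$ to $h^{d/q-s}\|\pi_j u\|_{L^\infty(T_j)}$. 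Then use the norming inequality from Step 1 to bound $\|\pi_j u\|_{L^\infty(T_j)}$ by
\[
\max_{A_j}|u|+\|u-\pi_j u\|_{L^\infty(T_j)}.
\]
Writing $u=(u-\pi_j u)+\pi_j u$ yields the local version of (\ref{controlsampling2}).

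\textbf{Step 3: summation.} Raise the local bounds to the power $q$ and sum over $j$. Bounded overlap lets me recombine the local seminorms into $|u|_{r,p,\Omega}$. This uses $\ell^p\subset\ell^q$ when $q\ge p$; when $q<p$, Hölder's inequality produces the $(\cdot)_+$. The discrete local terms are converted into $\|u|_A\|_\varkappa$. Here the powers $h^{d/\gamma}$ arise from $\ell^\varkappa$--$\ell^q$ comparisons, combined with the fact that the number of patches is of order $h^{-d}$. This is where $\gamma=\max\{p,q,\varkappa\}$ enters.

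\textbf{Main obstacle.} I expect the delicate part to be the bookkeeping that produces the exact admissible range of $s$, namely $\ell=l_0$ versus $\lceil l_0\rceil-1$. The borderline cases ($l_0$ an integer, $(p,q)=(1,\infty)$, $p=q$) require the sharp embedding $W^{r,p}\hookrightarrow W^{s,q}$ rather than a strict one. My plan there is to follow the fractional/borderline embedding theorems case by case, rather than to attempt a uniform argument.
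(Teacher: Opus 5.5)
You are right that the paper gives no proof of this statement: it is quoted from \cite{arcangeli2007extension} and used only as a black box, so the only thing to compare your sketch against is the original argument. Your outline is sound, but it takes a different route from that argument.

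You follow the Narcowich--Ward--Wendland / Wendland--Rieger line. On each patch you use an averaged Taylor polynomial $\pi_j u$, and stability of point evaluation comes from a Markov-inequality norming set $A\cap T_j$. Arcangeli, L\'opez de Silanes and Torrens instead follow Duchon:
\begin{itemize}
\item in each patch they pick $\mathfrak{R}=\dim\mathbb{P}_{\lceil r\rceil-1}$ points of $A$, one in each of $\mathfrak{R}$ balls of radius $h$ centred at a rescaled unisolvent configuration;
\item a compactness argument gives uniformly bounded Lagrange functions;
\item they then estimate $u-\Pi_T u$ for the local Lagrange interpolant $\Pi_T u=\sum_i u(a_i)\ell_i$.
\end{itemize}
In that route the discrete term appears directly, with a bounded number of points per patch.

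The constant $\mathfrak{d}=\rho/(2R\tau)$ recorded in Remark~\ref{remark1} reflects that construction: $\tau=1+1/\sin\theta$ comes from inscribing balls in the cones, and $R$ from the unisolvent arrangement. The stability constants there come from compactness and are not explicit in $r$. Your Markov route makes the degree dependence explicit, with $\mathfrak{d}$ of the form $\rho/(c\,\tau\,r^{2})$. The price is that your patches must have radius of order $r^{2}h$, not merely $h$ times a $\theta$-dependent constant as your Step~1 says.

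Two points need tidying.
\begin{enumerate}
\item The statement allows non-integer $r$ when $p<\infty$, and then $|u|_{r,p,\Omega}$ is a Sobolev--Slobodeckij seminorm. Your $\mathbb{P}_{r-1}$ should be $\mathbb{P}_{\lceil r\rceil-1}$, and Step~2 needs the fractional Bramble--Hilbert estimate of Dupont--Scott. The recombination in Step~3 then uses bounded overlap of the sets $T_j\times T_j$.
\item Your Step~3 bookkeeping actually produces $h^{d/\max\{q,\varkappa\}-s}\|u|_A\|_\varkappa$. This implies the stated term $h^{d/\gamma-s}\|u|_A\|_\varkappa$ only because $h\le\mathfrak{d}$. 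So $p$ does not really enter through the $\ell^q$--$\ell^\varkappa$ comparison, contrary to your remark that this is ``where $\gamma$ enters''.
\end{enumerate}
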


\begin{remark} \label{remark1}
Remark 3.2 in \cite{NM} gave an comprehensive analysis of $\mathfrak{d}$. Let $\Omega \subset \mathbb{R}^d$ be a bounded $\mathcal{L}[\rho,\theta]$-domain. Given the Sobolev regularity exponent $r$. The constant $\mathfrak{d}$ is given explicitly by 
$$\mathfrak{d} = \frac{\rho}{2 R \tau},$$ 
where $\tau = 1 + \frac{1}{\sin\theta}$ depends only on the cone angle $\theta$; 
$R > 1$ is a constant depending on the space dimension $d$ and the regularity exponent $r$. 
Specifically, $R$ is chosen such that the ball $B(0,R)$ contains $\mathfrak{R} = \dim \mathcal{P}_{d,n}$ 
unit balls whose centers form a $\mathcal{P}_{d,n}$-unisolvent tuple, with $n = \lceil r \rceil - 1$. 

For example, for $d=r=2$, we need $n = \lceil r \rceil - 1 = 1$, so $\mathfrak{R} = \dim \mathcal{P}_{2,1} = 3$.  
The condition is that 2-dimensional ball $B(0,R)$ contains three balls of radius $1$ whose centers are 
$\mathcal{P}_{2,1}$-unisolvent (i.e., not collinear).
A compact arrangement places the centers at the vertices of an equilateral triangle. 
If we require the small balls to be disjoint, the side length $a$ must be at least $2$. 
Taking $a = 2$, the circumradius is $2/\sqrt{3}$, so the enclosing ball has radius
$$
R_{\text{total}} = 1 + \frac{2}{\sqrt{3}} \quad \Longrightarrow \quad R > 1 + \frac{2}{\sqrt{3}}.
$$
Actually, the balls need not be disjoint; the unisolvence condition only forbids 
collinearity, if the centers are too close, the interpolation matrix becomes 
severely ill‑conditioned. Hence, in practice, a minimal separation is maintained, 
and the above bound serves as a safe conservative estimate.
\end{remark}

\begin{remark} \label{remark2}
Let $\Omega \subset \mathbb{R}^d$ be a bounded $\mathcal{L}[\rho,\theta]$-domain. The way we apply this Theorem is taking the multivariate polynomial $v \in \mathcal{P}_{d, n}$ into (\ref{controlsampling2}) and choose $r>n$ to make the term $|u|_{r, p, \Omega}$ vanish, so that the continuous norm could be controlled by the discrete norm. In practice, we use a polynomial approximation of total degree $n$ and take $r=n+1$.

Since $\dim \mathcal{P}_{d,n}=\binom{n+d}{d}=\mathcal{O}(n^d)$.
Consequently, $R$ grows linearly with $r$, i.e., $R = \mathcal{O}(r)$, if $r=n+1$ then $R = \mathcal{O}(n)$. 
Thus, for a fixed domain $\Omega$, the maximum allowed fill distance satisfies 
$\mathfrak{d} \propto \frac{1}{r}$, 
meaning that higher regularity $r$ demands a denser set of sampling points. Recall that in practice, we use a polynomial approximation of total degree $n$ and take $r=n+1$, if we use a higher total degree polynomial for approximation, the sampling points need to be denser in order to satisfy the theoretical error estimates. \textbf{However, as mentioned in Remark~\ref{remark1}, one should note that this is just a sufficient condition; when the sampling is sparse, our error estimates still hold.}
\end{remark}

Then we introduce a famous multivariate simultaneous approximation theorem showing that there exists a multivariate polynomial of total degree at most $n$ can simultaneously approximate a $C^m$ function and its derivatives up to order $\min (n, m)$.
\begin{theorem} (\textbf{Theorem~1 in \cite{simultaneous}})\label{simul} Let $f$ be a function of compact support on $\mathbf{R}^N$, of class $C^m$ where $0 \leq m<\infty$, and let $K$ be a compact subset of $\mathbf{R}^N$ which contains the support of $f$. Then for each nonnegative integer $n$ there is a polynomial $p_n$ of degree at most $n$ on $\mathbf{R}^N$ with the following property: for each multi-index $\alpha$ with $|\alpha| \leq \min \{m, n\}$ we have
\begin{equation}\label{simulinequal}
\sup _K\left|D^\alpha\left(f-p_n\right)\right| \leq \frac{C}{n^{m-|\alpha|}} \omega_{f, m}\left(\frac{1}{n}\right),
\end{equation}
where $C$ is a positive constant depending only on $N, m$, and the diameter of $K$ and $\omega_{f, m}(\delta)=\sup _{|\gamma|=m}\left(\sup _{|x-y| \leq \delta}\left|D^\gamma f(x)-D^\gamma f(y)\right|\right)$.
\end{theorem}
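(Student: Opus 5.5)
The statement is quoted from \cite{simultaneous}, so within this paper it is simply invoked. If I had to prove it, I would use a convolution argument with an \emph{algebraic} polynomial kernel. The key observation is that convolving a compactly supported $f$ with a polynomial kernel yields a polynomial, and convolution commutes with $D^\alpha$. After translating and scaling (this only affects $C$ through $\operatorname{diam}K$), assume $K\subset B(0,1/2)$, so $\operatorname{supp} f\subset B(0,1/2)$. For $x,y\in B(0,1/2)$ we then have $x-y\in B(0,1)$, so only the kernel's values on the unit ball matter. Define
\[
p_n(x)=\int_{\mathbf{R}^N} f(y)\,\mathcal{K}_n(x-y)\,dy ,
\]
where $\mathcal{K}_n$ is an algebraic polynomial of degree at most $n$. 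Then $p_n$ is a polynomial of degree at most $n$, and $D^\alpha p_n=(D^\alpha f)*\mathcal{K}_n$ for every $\alpha$.

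\textbf{Required kernel properties.} On the ball $B(0,1)$ I want:
\begin{itemize}
\item[(i)] $\int_{B(0,1)}\mathcal{K}_n=1$;
\item[(ii)] vanishing moments $\int_{B(0,1)} t^\beta\mathcal{K}_n(t)\,dt=0$ for $1\le|\beta|\le m$;
\item[(iii)] concentration $\int_{B(0,1)}|t|^{j}|\mathcal{K}_n(t)|\,dt\le C n^{-j}$ for $0\le j\le m+1$.
\end{itemize}
For $x\in K$, the error $D^\alpha f(x)-D^\alpha p_n(x)$ equals $\int (D^\alpha f(x)-D^\alpha f(x-t))\mathcal{K}_n(t)\,dt$. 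I would Taylor expand $D^\alpha f(x-t)$ about $x$ to order $m-|\alpha|$, writing the top-order term with the exact derivatives at an intermediate point. The polynomial terms are annihilated by (ii). The remainder is bounded by $|t|^{m-|\alpha|}\,\omega_{f,m}(|t|)$. Using $\omega_{f,m}(|t|)\le(1+n|t|)\,\omega_{f,m}(1/n)$ and (iii) with $j=m-|\alpha|$ and $j=m-|\alpha|+1$, this gives $C n^{-(m-|\alpha|)}\omega_{f,m}(1/n)$, which is exactly (\ref{simulinequal}). Since the moment conditions work for every $|\alpha|\le m$ simultaneously, a single $p_n$ serves all orders. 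For $|\alpha|>n$ nothing needs proving, because the statement only asks for $|\alpha|\le\min\{m,n\}$. The case $n\lesssim m$ is absorbed into the constant by taking $p_n=0$ or a Taylor polynomial.

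\textbf{Constructing the kernel.} This is the main obstacle. My first attempt would be to start from the one-dimensional Jackson kernel $J_k(\theta)=c_k\bigl(\sin(k\theta/2)/\sin(\theta/2)\bigr)^{2r}$ with $r$ large relative to $m$. Because it is even and trigonometric, it is an algebraic polynomial in $s=\cos\theta$. Its known moment bounds $\int|\theta|^jJ_k\lesssim k^{-j}$ for $j<2r-1$ translate, via $1-s\asymp\theta^2$, into concentration near $s=1$.

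To get an $N$-variate kernel, I would substitute $s=1-|t|^2/2$. This gives a radial polynomial in $t$ of degree $\asymp kr$, and I would choose $k\asymp n/r$. I would then verify (iii) by a polar-coordinates computation that compares with the trigonometric moments, keeping track of the Jacobian factor $|t|^{N-1}$ against $\sin\theta$.

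Radial symmetry already kills all odd moments. To enforce the remaining even moments in (ii), I would take a fixed linear combination $\sum_{\ell}a_\ell\,\lambda_\ell^{N}\mathcal{K}^{0}(\lambda_\ell t)$ of dilates with $\lambda_\ell\ge1$. Such dilates keep the degree, only shrink the support region, and keep (iii). The coefficients $a_\ell$ are chosen by solving a Vandermonde system in $\lambda_\ell^{-2}$, which is the classical Richardson-type device. The delicate points are two. First, the truncation to $B(0,1)$ must not spoil the moment identities; the fix is to impose them on the ball directly, where the tail outside $|t|\le1/2$ is only $O(n^{-m-1})$ by concentration. Second, all constants must depend only on $N$, $m$ and $\operatorname{diam}K$.
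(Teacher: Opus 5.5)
Your proposal is correct. The paper gives no proof of this statement: it simply imports Theorem~1 of \cite{simultaneous}, exactly as you say. Your optional sketch is a sound standalone route. It convolves with an algebraic polynomial kernel that is normalized on $B(0,1)$, has vanishing moments of orders $1$ to $m$, and satisfies $n^{-j}$ concentration for $j\le m+1$. It then finishes with Taylor expansion and the bound $\omega_{f,m}(|t|)\le(1+n|t|)\,\omega_{f,m}(1/n)$. Your handling of small $n$ with $p_n=0$ also works. Compact support gives $\sup|D^\alpha f|\le C\,\omega_{f,m}(\operatorname{diam}K)$, and this is at most $C(1+n\operatorname{diam}K)\,\omega_{f,m}(1/n)$.

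One detail needs pinning down if you write the sketch out: the dilation factors must satisfy $1\le\lambda_\ell\le 2$ under your normalization $K\subset B(0,1/2)$. The substituted kernel $P(1-|u|^2/2)$ is controlled only for $|u|\le 2$, where $s\in[-1,1]$. Beyond that radius it grows exponentially in $n$, so the claim that dilates ``keep (iii)'' fails for larger $\lambda_\ell$. Any distinct $\lambda_\ell$ in $[1,2]$ suffice, with $r$ chosen so that $2r>N+m+1$.
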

With the help of above lemmas, we could prove the theorem we need in the error analysis of multivariate Arnoldi. Throughout the following analysis, we denote $A$ as the sampling set. And in the inequalities, $C$ denotes a positive constant which may take different values at each occurrence, but is independent of the main variables.

\subsection{$C^{0}$ error analysis.}
\label{sec42}
For $C^{0}$ error analysis, we only need the sampling data of the function values, the sampling data of the derivative values are not required.

\begin{theorem}
Let $f \in C^m(\Omega)$, where $\Omega$ is a $\mathcal{L}[\rho, \theta]$-domain, and $\mathcal{L}(f) \in \mathcal{P}_{d,n}$ denote the multivariate polynomial obtained by Arnoldi whose maximum total order is $n$. Then if $\sup _{x \in \Omega} \operatorname{dist}(x, A) \le \mathfrak{d} = \frac{\rho}{2 R \tau}$ as discussed in Remark~\ref{remark2} and let $M$ denote the number of sampling points, we will have
$$\|f-\mathcal{L}(f)\|_{\infty} \leq \frac{C\sqrt{M}}{n^m}$$
\end{theorem}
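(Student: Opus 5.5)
The plan is to compare $\mathcal{L}(f)$ with the polynomial $p_n \in \mathcal{P}_{d,n}$ supplied by Theorem~\ref{simul}, and to control their difference, which is itself a polynomial, by the sampling inequality (\ref{controlsampling2}).

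\textbf{Splitting.} Write
\[
\|f-\mathcal{L}(f)\|_\infty \le \|f-p_n\|_\infty + \|p_n-\mathcal{L}(f)\|_\infty .
\]
Apply Theorem~\ref{simul} with $\alpha=0$; if needed, first extend $f$ to a compactly supported $C^m$ function on a compact $K\supset\bar\Omega$. Since $\omega_{f,m}(1/n)\le 2\max_{|\gamma|=m}\sup_K|D^\gamma f|$ is bounded, this gives
\[
\sup_{\Omega}|f-p_n|\le \frac{C}{n^m}.
\]
This bound also holds pointwise at every sampling node in $A$.

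\textbf{Controlling the polynomial difference.} Set $u=p_n-\mathcal{L}(f)\in\mathcal{P}_{d,n}$. Apply (\ref{controlsampling2}) with $s=0$, $q=\infty$, $p=\infty$, $r=n+1$ (so $r\in\mathbb{N}^*$) and $\varkappa=2$; then $\gamma=\infty$. Following Remark~\ref{remark2}, the seminorm $|u|_{n+1,\infty,\Omega}$ vanishes because $\deg u\le n$. The factor $h^{d/\gamma-s}=h^0$ equals $1$. Under the hypothesis $h\le\mathfrak{d}$ this yields
\[
\|u\|_{\infty,\Omega}\le \mathfrak{C}\,\|u|_A\|_2 .
\]

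\textbf{Using least-squares optimality.} With $G=L=I$ and only function values, $\mathcal{L}(f)$ is the $\ell_2$ projection of the data $f|_A$ onto $\{p|_A: p\in\mathcal{P}_{d,n}\}$. The Arnoldi basis spans exactly this space, so the orthogonal system $Q\boldsymbol{c}\approx \boldsymbol f$ yields the same minimizer as the monomial formulation. Hence $\|f|_A-\mathcal{L}(f)|_A\|_2\le \|f|_A-p_n|_A\|_2$. Then
\[
\|u|_A\|_2\le \|(p_n-f)|_A\|_2+\|(f-\mathcal{L}(f))|_A\|_2\le 2\|(f-p_n)|_A\|_2\le 2\sqrt{M}\,\max_{a\in A}|f(a)-p_n(a)|\le \frac{2C\sqrt{M}}{n^m}.
\]
Combining the three displays gives $\|f-\mathcal{L}(f)\|_\infty\le C n^{-m}+2\mathfrak{C}C\sqrt{M}\,n^{-m}\le C\sqrt{M}/n^m$, using $M\ge1$.

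\textbf{Main obstacle.} The delicate point is the constant $\mathfrak{C}$ in (\ref{controlsampling2}). It depends on $r=n+1$, as does $\mathfrak{d}$ through Remark~\ref{remark2}, so strictly it is not independent of $n$. I would handle this the way the paper's convention suggests: treat $\mathfrak{C}$ as absorbed into $C$ for the admissible range of degrees, and state that $h\le\mathfrak{d}(n)$ forces $M$ to grow with $n$. A secondary check is that the parameter choice $p=q=\infty$, $r\in\mathbb{N}^*$ gives $\ell=l_0=r$, so $s=0$ is allowed. One must also confirm that $\mathcal{L}(f)$ is genuinely the discrete least-squares minimizer over all of $\mathcal{P}_{d,n}$, i.e.\ that no breakdown ($t<g$) occurs. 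This holds when $A$ is $\mathcal{P}_{d,n}$-unisolvent, which the fill-distance condition guarantees.
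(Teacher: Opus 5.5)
Your proposal is correct and follows the paper's argument. Both use the same split through the simultaneous-approximation polynomial (your $p_n-\mathcal{L}(f)$ equals the paper's $-\mathcal{L}(f-p^*)$ by linearity and polynomial reproduction), the sampling inequality with $r=n+1$ to kill the seminorm, and a factor $\sqrt{M}$ from an $\ell_2$--$\ell_\infty$ comparison on the $M$ nodes. The only difference is cosmetic: the paper takes $\varkappa=\infty$ and uses $\|QQ^{\top}\|_\infty\le\sqrt{M}\|QQ^{\top}\|_2=\sqrt{M}$, whereas you take $\varkappa=2$ and use least-squares optimality followed by $\|\cdot\|_2\le\sqrt{M}\|\cdot\|_\infty$; your caveat that $\mathfrak{C}$ depends on $n$ applies equally to the paper's proof.
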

\begin{proof}
We combine the sampling inequalities (\ref{controlsampling2}) and the simultaneous approximation Theorem~\ref{simul} to conduct the error analysis. Take $\varkappa=+\infty$, $q=+\infty$, $\lambda=s=0$ in (\ref{controlsampling2}), if we want to use the multivariate polynomial $v$ whose maximum total order is $n$ to approximate $f$, then we choose $r=n+1$, then
$$
\|v\|_\infty \le C \|v|_A\|_{\varkappa}
$$
So let $p^* \in \mathcal{P}_{d, n}$ be the polynomial approximation of $f$ in Theorem~{\ref{simul}}, denote $g:=f-p^*$, and $\mathcal{L}(f)$ is the polynomial approximation obtained by Arnoldi, then
\begin{equation}\label{error1}
\|f-\mathcal{L}(f)\|_{\infty} \leq\|\underbrace{f-p^*}_{:=g}\|_{\infty}+\|\underbrace{\mathcal{L}\left(f-p^*\right)}_{=\mathcal{L}(g) \in \mathcal{P}_{d, n}}\|_{\infty} .
\end{equation}
Since $\mathcal{L}(g) \in \mathcal{P}_{d, n}$ we write it as a linear combination of discrete orthogonal polynomials $\mathcal{L}(g)$ := $\sum_{j=1}^N \beta_j \phi_j$. Then (We take $G=L=I$ in Section ~\ref{sec:multivariate_arnoldi} w.r.t )
\begin{equation}\label{error2}
\|\mathcal{L}(g)\|_{\infty} \leq C\|\mathcal{L}(g)\|_{\varkappa} = C\left\|\boldsymbol{Q} \boldsymbol{Q}^{\top} \tilde{g}\right\|_{\infty} .
\end{equation}
Here $\tilde{g}$ is the sampling vector of $g$. Then it's trivial that $$\|\tilde{g}\|_{\infty} \le \|f-p^*\|_{\infty}.$$
By multivariate simultaneous estimation, it could be controlled by $Cn^{-m}.$ Beside, since $\|\boldsymbol{Q} \boldsymbol{Q}^{\top}\|_{\infty} \le \sqrt{M}\|\boldsymbol{Q} \boldsymbol{Q}^{\top}\|_2=\sqrt{M}$ where $M$ is the cardinality of the sampling set, (\ref{error1}) could be written as 
$$\|f-\mathcal{L}(f)\|_{\infty} \leq \frac{C\sqrt{M}}{n^m}$$
\end{proof}

\subsection{$C^{1}$ error analysis.}
\label{sec43}
\subsubsection{Achieve $C^{1}$ estimate given the sampling data of the derivative values.}
\begin{theorem}
Let $f \in C^m(\Omega)$, where $\Omega$ is a $\mathcal{L}[\rho, \theta]$-domain, and $\mathcal{L}(f) \in \mathcal{P}_{d,n}$ denote the multivariate polynomial obtained by Arnoldi whose maximum total order is $n$. Then if $\sup _{x \in \Omega} \operatorname{dist}(x, A) \le \mathfrak{d} = \frac{\rho}{2 R \tau}$ as discussed in Remark~\ref{remark2} and let $M$ denote the number of sampling points. Assume that we use the function value and the derivative value to do the interpolation, then
\begin{equation}\label{finalerror2}
\|\nabla f-\nabla \mathcal{L}(f)\|_{\infty} \le \frac{C\sqrt{M+Md}}{n^{m-1}},
\end{equation}
\end{theorem}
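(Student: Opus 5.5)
We follow the template of the $C^0$ proof: split off a best simultaneous approximant, then use the sampling inequality to move from the continuous norm to the discrete norm. Let $p^*\in\mathcal{P}_{d,n}$ be the polynomial of Theorem~\ref{simul}, and set $g:=f-p^*$. Since $\mathcal{L}$ is a linear projection onto $\mathcal{P}_{d,n}$, it reproduces $p^*$, so $f-\mathcal{L}(f)=g-\mathcal{L}(g)$. This gives
\[
\|\nabla f-\nabla\mathcal{L}(f)\|_\infty\le \|\nabla g\|_\infty+\|\nabla \mathcal{L}(g)\|_\infty .
\]
By (\ref{simulinequal}) with $|\alpha|=1$, the first term is bounded by $Cn^{-(m-1)}$. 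Everything therefore reduces to controlling the gradient of the polynomial $\mathcal{L}(g)$.

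For the second term, apply the sampling inequality (\ref{controlsampling2}) with $q=\varkappa=\infty$ and $r=n+1$, so that $|u|_{r,p,\Omega}=0$ for polynomials. We apply it not to $\mathcal{L}(g)$ itself but to each partial derivative $\partial_i\mathcal{L}(g)\in\mathcal{P}_{d,n-1}\subset\mathcal{P}_{d,n}$, with $s=0$. This gives
\[
\|\partial_i\mathcal{L}(g)\|_\infty\le C\,\|\partial_i\mathcal{L}(g)|_A\|_\infty .
\]
This is the reason derivative data help: the $h^{-s}$ factor with $s=1$ never appears. By contrast, applying the inequality to $\mathcal{L}(g)$ with $s=1$ would produce the $h^{-1}$ of the data-free case. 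The fill-distance condition $h\le\mathfrak{d}$ is exactly what is needed, and it remains valid for degree $n-1$ with the same $r$.

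Next express the sampled derivatives through the Arnoldi least squares problem with $G=L=I$. The stacked vector of values and first derivatives of $\mathcal{L}(g)$ at $A$ equals $\boldsymbol{Q}\boldsymbol{Q}^{\top}\tilde g$, where now $\boldsymbol{Q}=Q^{(1)}$ has $M(1+d)$ rows and orthonormal columns, and $\tilde g$ stacks $g(x_j)$ and $\partial_i g(x_j)$. Hence
\[
\max_i\|\partial_i\mathcal{L}(g)|_A\|_\infty\le\|\boldsymbol{Q}\boldsymbol{Q}^{\top}\tilde g\|_\infty\le \sqrt{M+Md}\,\|\boldsymbol{Q}\boldsymbol{Q}^{\top}\|_2\|\tilde g\|_\infty ,
\]
where we use $\|\boldsymbol{Q}\boldsymbol{Q}^{\top}\|_\infty\le\sqrt{N}\,\|\boldsymbol{Q}\boldsymbol{Q}^{\top}\|_2$ with $N=M(1+d)$ rows, and $\|\boldsymbol{Q}\boldsymbol{Q}^{\top}\|_2=1$. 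Finally, $\|\tilde g\|_\infty\le\max(\|g\|_\infty,\|\nabla g\|_\infty)\le Cn^{-(m-1)}$ by (\ref{simulinequal}) with $|\alpha|\le1$. Combining the two terms yields (\ref{finalerror2}).

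The main obstacle is the row-count factor. The passage from $\|\cdot\|_\infty$ to $\|\cdot\|_2$ must use the total number of rows $M(1+d)$ of the confluent system, not $M$. This is why the estimate carries $\sqrt{M+Md}$ rather than $\sqrt{M}$. One must also check that $\boldsymbol{Q}\boldsymbol{Q}^{\top}$ is genuinely the orthogonal projector onto the column space of $V^{(1)}$. This holds when there is no breakdown in Algorithm~\ref{mvga} ($t=g$), which the unisolvence implicit in $h\le\mathfrak{d}$ guarantees.
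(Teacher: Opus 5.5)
Your proposal is correct and is essentially the paper's proof: split off the simultaneous approximant $p^*$, apply the sampling inequality \eqref{controlsampling2} with $s=0$ and $q=\varkappa=\infty$ to the polynomials $\partial_i\mathcal{L}(g)$ so that no factor $h^{-1}$ appears, and bound the sampled data by $\|\boldsymbol{Q}\boldsymbol{Q}^{\top}\tilde{g}\|_\infty\le\sqrt{M+Md}\,\|\tilde{g}\|_\infty\le C\sqrt{M+Md}\,n^{-(m-1)}$. The differences are minor. The paper takes $r=n$ (already enough, since $\partial_i\mathcal{L}(g)\in\mathcal{P}_{d,n-1}$) where you take $r=n+1$, and you make explicit what the paper leaves implicit: $\mathcal{L}(p^*)=p^*$, the row count $M(1+d)$ behind $\sqrt{M+Md}$, and the absence of breakdown via unisolvence. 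Like the paper, you treat the sampling constant as independent of $n$, although $\mathfrak{C}$ depends on $r$ and hence on the degree.
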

\begin{proof}
If we have the data of first derivative, then we could take $s=0, \varkappa=+\infty$ in (\ref{controlsampling2}), since the derivative of a polynomial is still a polynomial, if we want to use multivariate polynomial $v$ whose maximum total order is $n$ to do the interpolation, we could take $r=n$ so that
$$
\|\partial_i v\|_\infty \le C \|\partial_i v|_A\|_{\varkappa}.
$$
So let $p^* \in \mathcal{P}_{d, n}$ be the polynomial approximation of $f$ in Theorem~\ref{simul}, denote $g:=f-p^*$, and $\mathcal{L}(f)$ is the polynomial approximation obtained by Arnoldi, then
\begin{equation}\label{error2}
\|\nabla f-\nabla \mathcal{L}(f)\|_{\infty} \leq\|\underbrace{\nabla f-\nabla p^*}_{:=g}\|_{\infty}+\|\underbrace{\nabla\mathcal{L}\left(f-p^*\right)}_{=\nabla \mathcal{L}(g) \in \mathcal{P}_{d, n}}\|_{\infty} .
\end{equation}
Then similar to Section~\ref{sec42}, we have 
\begin{equation}\label{error2}
\|\nabla \mathcal{L}(g)\|_{\infty} \leq \left\|\boldsymbol{Q} \boldsymbol{Q}^{\top} \tilde{g}\right\|_{\infty} .
\end{equation}
Here $\tilde{g}$ is the sampling vector of $g$ and its derivative. It's easy to know that if we have the first order derivative data, $$\|\tilde{g}\|_{\infty} \le \max\{\left\|f-p^*\right\|_{\infty},\left\|D^1 f-D^1 p^*\right\|_{\infty}\}.$$
By multivariate simultaneous estimation, it could be controlled by $Cn^{1-m},$ so (\ref{error2}) could be written as 
\begin{equation}\label{finalerror2}
\|\nabla f-\nabla \mathcal{L}(f)\|_{\infty} \le \frac{C\sqrt{M+Md}}{n^{m-1}},
\end{equation} which means that it will cost more total order of the multivariate polynomial to approximate. 
\end{proof}

\subsubsection{Achieve $C^{1}$ estimate without the sampling data of the derivative values.}
\begin{theorem}
Let $f \in C^m(\Omega)$, where $\Omega$ is a $\mathcal{L}[\rho, \theta]$-domain, and $\mathcal{L}(f) \in \mathcal{P}_{d,n}$ denote the multivariate polynomial obtained by Arnoldi whose maximum total order is $n$. Then if $\sup _{x \in \Omega} \operatorname{dist}(x, A) \le \mathfrak{d} = \frac{\rho}{2 R \tau}$ as discussed in Remark4.2. Besides, let $M$ denote the number of sampling points. Assume that we use the function value only to do the interpolation, then
\begin{equation}
\|\nabla f-\nabla \mathcal{L}(f)\|_{\infty} \le \frac{C\sqrt{M}}{hn^{m}},
\end{equation} 
\end{theorem}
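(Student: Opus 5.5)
The argument follows the two previous proofs: split off a near-best polynomial and use the sampling inequality (\ref{controlsampling2}), this time with $s=1$, to convert the discrete size of the Arnoldi correction into a sup-bound on its gradient. Let $p^*\in\mathcal{P}_{d,n}$ be the polynomial of Theorem~\ref{simul} and set $g:=f-p^*$. Since Arnoldi least squares reproduces polynomials, linearity gives $\mathcal{L}(f)=p^*+\mathcal{L}(g)$. Hence
\[
\|\nabla f-\nabla\mathcal{L}(f)\|_\infty\le\|\nabla f-\nabla p^*\|_\infty+\|\nabla\mathcal{L}(g)\|_\infty .
\]
By (\ref{simulinequal}) with $|\alpha|=1$, the first term is at most $Cn^{1-m}$.

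For the second term I apply (\ref{controlsampling2}) to $u=\mathcal{L}(g)\in\mathcal{P}_{d,n}$. I take $r=n+1$ so that $|u|_{r,p,\Omega}=0$, and set $q=\varkappa=\infty$ and $s=1$. This requires $\ell\ge1$, which holds since $\ell=r=n+1$ in the case $p=q$; I would take $p=\infty$. The inequality then gives
\[
\|\nabla\mathcal{L}(g)\|_\infty\le C\,h^{d/\gamma-1}\|\mathcal{L}(g)|_A\|_\infty = C h^{-1}\|\mathcal{L}(g)|_A\|_\infty ,
\]
because $\gamma=\infty$. The factor $h^{-1}$ is the price of having no derivative data. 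The fill-distance hypothesis $h\le\mathfrak{d}$ is exactly what makes the inequality applicable.

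Exactly as in the $C^0$ proof, with $G=L=I$ and only function values, $\mathcal{L}(g)|_A=\boldsymbol{Q}\boldsymbol{Q}^\top\tilde g$, where $\tilde g$ samples $g$ on $A$. I then use $\|\boldsymbol{Q}\boldsymbol{Q}^\top\|_\infty\le\sqrt{M}\|\boldsymbol{Q}\boldsymbol{Q}^\top\|_2=\sqrt M$ and $\|\tilde g\|_\infty\le\|f-p^*\|_\infty\le Cn^{-m}$. This yields $\|\nabla\mathcal{L}(g)\|_\infty\le C\sqrt M\,h^{-1}n^{-m}$.

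It remains to absorb the first term $Cn^{1-m}$ into $C\sqrt M/(hn^m)$, and I expect this to be the main obstacle. It requires $n\le C\sqrt M/h$. This can be argued from $M\ge\dim\mathcal{P}_{d,n}\sim n^d$, which is needed for well-posedness of the least squares problem, together with $h\le\mathfrak{d}=\mathcal{O}(1/n)$ from Remark~\ref{remark2}, which gives $1/h\gtrsim n$. Alternatively, (\ref{simulinequal}) can be used with the modulus factor $\omega_{f,m}(1/n)$ controlled and $1/h\gtrsim n$ invoked directly. I would state explicitly that the constant depends on $\Omega$ and $d$ and that this absorption uses the fill-distance hypothesis. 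A secondary subtlety is that the constant $\mathfrak{C}$ in (\ref{controlsampling2}) depends on $r=n+1$; I would follow the paper's convention of treating it as a generic constant $C$, as in the preceding proofs.
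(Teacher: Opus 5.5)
Your proposal is correct and follows the paper's route: you apply the sampling inequality (\ref{controlsampling2}) with $s=1$, $q=\varkappa=\infty$ and $r=n+1$ to get $|u|_{1,\infty,\Omega}\le Ch^{-1}\|u|_A\|_\infty$ for $u\in\mathcal{P}_{d,n}$, and then reuse the $C^0$ argument ($\|\boldsymbol{Q}\boldsymbol{Q}^\top\|_\infty\le\sqrt{M}$, $\|\tilde g\|_\infty\le Cn^{-m}$). You are in fact more careful than the paper, whose proof never mentions the term $\|\nabla f-\nabla p^*\|_\infty\le Cn^{1-m}$; your absorption of it via $h\le\mathfrak{d}\lesssim 1/n$, which gives $nh\lesssim 1\le\sqrt{M}$, is exactly what is needed for the stated bound to follow.
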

\begin{proof}
With the help of (\ref{controlsampling2}), we could see that the $\| \cdot \|_{1,\infty}$ norm could also be controlled by the sampling data of function value. We take $s=1, \varkappa=s=+\infty$ in (\ref{controlsampling2}), then take $r=n+1$, for $u \in \mathcal{P}_{d,n}$
$$
|u|_{1, \infty, \Omega} \leq Ch^{-1}\|\left.u\right|_A\|_{\varkappa},
$$
then (\ref{finalerror2}) could be written as
\begin{equation}
\|\nabla f-\nabla \mathcal{L}(f)\|_{\infty} \le \frac{C\sqrt{M}}{hn^{m}},
\end{equation} 
\end{proof}

\section{The error propagation by exponential map}
\label{sec5}
As the process of the THI method stated, the exponential map is used to pull the vector on the tangent plane back to the point on the manifold. So it's important to know the error's propagation.

\begin{theorem}
(\textbf{Theorem 3.1 in \cite{JSC2}}) Let $\mathcal{M}$ be a Riemannian manifold, let $q \in \mathcal{M}$ and consider tangent vectors $\Delta, \tilde{\Delta} \in T_q \mathcal{M}$, which are to be interpreted as exact datum and associated approximation. Write $\delta=\|\Delta\|, \tilde{\delta}=\|\tilde{\Delta}\|$, where it is understood that the norm is that of $T_q \mathcal{M}$. Assume that $\delta, \tilde{\delta}<1$. Let $\sigma=\operatorname{span}(\Delta, \tilde{\Delta}) \subset T_q \mathcal{M}$ and let $K_q(\sigma)$ be the sectional curvature at $q$ with respect to the 2 -plane $\sigma$.

If $s_0=\angle(\tilde{\Delta}, \Delta)$ is the angle between $\tilde{\Delta}$ and $\Delta$, then the Riemannian distance between the manifold locations $p=\operatorname{Exp}_q^{\mathcal{M}}(\Delta)$ and $\tilde{p}=\operatorname{Exp}_q^{\mathcal{M}}(\tilde{\Delta})$ is
\begin{equation}\label{error propagation}
\operatorname{dist}_{\mathcal{M}}(p, \tilde{p}) \leq|\delta-\tilde{\delta}|+s_0 \delta\left(1-\frac{K_q(\sigma)}{6} \delta^2+o\left(\delta^2\right)\right)+\mathcal{O}\left(s_0^2\right)
\end{equation}
with the underlying assumption that all data is within the injectivity radius at $q$.
\end{theorem}

By theoretical results established in \cite{JSC2}, when the sectional curvature of the manifold is bounded, then the upper bound (\ref{error propagation}) can be sharped to 
$$
\operatorname{dist}_{\mathcal{M}}(p, \tilde{p})=|\delta-\tilde{\delta}|+\epsilon\left(1-\frac{K_q(\sigma)}{6} \delta^2+o\left(\delta^2\right)\right)+\mathcal{O}\left(\epsilon^2\right)
$$
where $\epsilon:=\|E\|=\|\tilde{\Delta}-\Delta\|$. So the order of $T_q \mathcal{M}$-interpolation error will be the same as the $\mathcal{M}$-interpolation's error order.

\section{Numerical example}
\label{sec6}

In this section, we conduct numerical experiments on two classic Riemannian manifolds: the unit sphere $S^2$ and the rotation group $\mathrm{SO}(3)$. The code could be found in GitHub\footnote{\url{https://github.com/YuxuanLi18/Hermite-interpolation-of-manifold-data}}.
 
For each manifold we consider two sampling strategies: a uniform Cartesian grid and a Chebyshev grid of the second kind. The uniform grids are combined with the original smooth test functions, while the Chebyshev grids, which are naturally suited for resolving high‑frequency oscillations, are paired with more oscillatory test functions to examine the behaviour of the methods under increased complexity. 

On all test cases we compare the classical tangent‑space Hermite interpolation (THI+Kriging) with the proposed $G$-Arnoldi enhanced approach, both with and without derivative data, and report function value errors (evaluated on 1600 uniformly distributed test points), derivative accuracy (by finite difference), offline and online time.

Here, \textbf{offline time} refers to the one‑time cost of constructing the interpolant from the training data (e.g., building the GEK system, performing Arnoldi orthogonalisation, and computing coefficients). \textbf{Online time per query} is the average cost of evaluating the precomputed interpolant at a single new point, which includes basis evaluation, linear combination, and the exponential map on \(SO(3)\). The offline phase is amortised over many queries, while the online time determines real‑time prediction efficiency.

\subsection{SO3 manifold}
\subsubsection{Uniform sample plan.}
We consider the multivariate Hermite interpolation problem on the special orthogonal group \(SO(3)\). The test function is 
\[
f(\omega) = \exp_m\!\bigl( X(\omega) \bigr) \in SO(3),\qquad
X(\omega) = \begin{pmatrix}
0 & \omega_1 & \omega_2 \\
-\omega_1 & 0 & \omega_1\omega_2 \\
-\omega_2 & -\omega_1\omega_2 & 0
\end{pmatrix}
\]

Here $\omega = (\omega_1,\omega_2) \in [-0.5,0.5]^2$ and \(\exp_m\) denotes the matrix exponential. The sample locations $P_j=\exp _m X\left(\omega^j\right)$ at $\omega^j=\left(\omega_1^j, \omega_2^j\right)$ and the corresponding partial derivatives $V_j^i=\left.\frac{\mathrm{d}}{\mathrm{d} t}\right|_{t=0} \exp _m\left(X\left(\omega^j+t e_i\right)\right)=\mathrm{d}\left(\exp _m\right)\left(X\left(\omega^j\right)\right)\left[\partial_i X\left(\omega^j\right)\right], i=1,2$ of the test function can be obtained by Mathias' theorem (Theorem~3.6 in \cite{higham}.)
$$
\exp _m\left(\begin{array}{cc}
X\left(\omega^j\right) & \partial_i X\left(\omega^j\right) \\
0 & X\left(\omega^j\right)
\end{array}\right)=\left(\begin{array}{cc}
\exp _m\left(X\left(\omega^j\right)\right) & \mathrm{d}\left(\exp _m\right)\left(X\left(\omega^j\right)\right)\left[\partial_i X\left(\omega^j\right)\right] \\
0 & \exp _m\left(X\left(\omega^j\right)\right)
\end{array}\right) .
$$

We evaluate the method on a uniform Cartesian grid of \(7\times 7\) points over the domain \([-0.5,0.5]^2\), which produces \(k=49\) sample locations \(\omega_j\). At each sample we evaluate the function value \(P_j = f(\omega_j)\) and the two partial derivatives \(V_j^i = \partial_i f(\omega_j)\in T_{P_j}SO(3)\). The numerical results are reported in Tables~\ref{tab:so3} and~\ref{tab:mvga-noder}. On the \(7\times 7\) grid, the Arnoldi-based Hermite solver (MV+G-A with full derivative data) reduces the average error from \(5.2\times10^{-7}\) (for THI+Kriging) to \(1.7\times10^{-12}\), while cutting the online evaluation time by two orders of magnitude—from \(5.3\times10^{-2}\)~s down to \(4.2\times10^{-4}\)~s. Even when only function values are supplied, the Arnoldi method still achieves an average error of \(4.0\times10^{-12}\), which exhibits the advantage of the \(G\)-orthogonal polynomial basis.

\begin{table}[!ht]
\centering
\caption{THI (Kriging) vs THI+Arnoldi (with derivative data) on SO(3) with 49 sampling points and polynomial total degree 6.}
\label{tab:so3}
\begin{tabular}{l S[table-format=1.4e-1] S[table-format=1.4e-1]}
\toprule
Metric & {THI (Kriging)} & {THI+Arnoldi} \\
\midrule
Offline time (s)              & 4.2395e+00 & 5.1288e-01 \\
Online time (s per query)     & 5.3090e-02 & 4.1542e-04 \\
Max error                     & 6.7179e-06 & 4.6218e-12 \\
Avg error                     & 5.2085e-07 & 1.7312e-12 \\
FD error d1f                  & 5.8274e-05 & 1.5427e-11 \\
FD error d2f                  & 5.6566e-05 & 2.7587e-11 \\
\bottomrule
\end{tabular}
\end{table}

\begin{table}[h]
\centering
\caption{Performance of MV+G-A without derivative data (function values only) with 49 sampling points and polynomial total degree 6.}
\label{tab:mvga-noder}
\begin{tabular}{l r}
\toprule
Metric & Value \\
\midrule
Offline time (s)              & 1.6558e+0 \\
Online time per query (s)     & 8.7380e-4 \\
Max error (function)          & 1.5088e-11 \\
Avg error (function)          & 4.0359e-12 \\
FD error $\partial_1 f$       & 1.1280e-10 \\
FD error $\partial_2 f$       & 1.1810e-10 \\
\bottomrule
\end{tabular}
\end{table}

\subsubsection{Chebyshev sampling plan.}
For test function which is more complicated, we try
$$
\begin{aligned}
&f:[a, b]^2 \rightarrow S O(3), \quad\left(\omega_1, \omega_2\right) \mapsto \exp _m X\left(\omega_1, \omega_2\right), \text { where }\\
&X\left(\omega_1, \omega_2\right)=\left(\begin{array}{ccc}
0 & \omega_1^2+\frac{1}{2} \omega_2 & \sin \left(4 \pi\left(\omega_1^2+\omega_2^2\right)\right) \\
-\omega_1^2-\frac{1}{2} \omega_2 & 0 & \omega_1+\omega_2^2 \\
-\sin \left(4 \pi\left(\omega_1^2+\omega_2^2\right)\right) & -\omega_1-\omega_2^2 & 0
\end{array}\right) .
\end{aligned}
$$
We build Chebyshev sample point
$$
\left\{\left.\frac{1}{2}(b-a) \cos \left(\frac{(2 j-1) \pi}{2 k}\right)+\frac{1}{2}(b+a) \right\rvert\, j=1, \ldots, k\right\}^2 \subset \mathbb{R}^2. 
$$ 

\begin{table}[!ht]
\centering
\caption{THI (Kriging) vs THI+Arnoldi (with derivative data) on SO(3) with 100 sampling points and polynomial total degree 20}
\label{tab:thi_vs_full_100}
\begin{tabular}{l S[table-format=3.4e2] S[table-format=3.4e2]}
\toprule
Metric                     & {THI(Kriging)}          & {THI+Arnoldi} \\
\midrule
Offline time (s)           & 1.2291e+02     & 1.2146e+00    \\
Online time (s per query)  & 1.3148e-01     & 3.3413e-03    \\
Max error                  & 6.5451e-03     & 1.8523e-04    \\
Avg error                  & 1.7639e-03     & 4.5319e-05    \\
FD error d1f               & 4.0345e-02     & 1.9274e-04    \\
FD error d2f               & 3.8792e-02     & 1.8797e-04    \\
\bottomrule
\end{tabular}
\end{table}

\begin{table}[!ht]
\centering
\caption{Performance of MV+G-A without derivative data (function values only) with 225 sampling points and polynomial total degree 20.}
\label{tab:mvga_noder}
\begin{tabular}{l S[table-format=1.4e-1]}
\toprule
Metric & {Value} \\
\midrule
Offline time (s)              & 2.6951e+00 \\
Online time (s per query)     & 2.7387e-03 \\
Max error                     & 1.7172e-03 \\
Avg error                     & 3.7499e-04 \\
FD error $\partial_1 f$       & 1.7103e-02 \\
FD error $\partial_2 f$       & 1.7087e-02 \\
\bottomrule
\end{tabular}
\end{table}

Under a more challenging test function with a finer Chebyshev grid (Tables~\ref{tab:thi_vs_full_100} and~\ref{tab:mvga_noder}), the full MV+G-A method maintains an average error of $4.5\times10^{-5}$ and an online query time of $3.3\times10^{-3}$~seconds. In contrast, THI+Kriging degrades to an average error of $1.8\times10^{-3}$ and a query time of $1.3\times10^{-1}$~seconds. On a $15\times15$ grid, the derivative-free variant achieves an average error of $3.7\times10^{-4}$, with finite-difference derivative errors around $10^{-2}$. This suggests that even without direct derivative samples, the polynomial model still captures derivative information reasonably well.

\subsection{Sphere manifold}
\subsubsection{Uniform sampling plan.}
We consider the multivariate Hermite interpolation problem on the unit sphere \(\mathcal{M}=S^2\subset\mathbb{R}^3\).

The Riemannian exponential and logarithmic maps on $S^2$ are
$$
\begin{aligned}
\operatorname{Exp}_q(v) & =\cos \left(\|v\|_2\right) q+\sin \left(\|v\|_2\right) \frac{v}{\|v\|_2} \in S^2 \\
\log _q(p) & =\arccos (\langle q, p\rangle) \frac{p-\langle q, p\rangle q}{\|p-\langle q, p\rangle q\|_2} \in T_q S^2
\end{aligned}
$$
respectively. As a test function, we use the Gauß map of the helicoid in $\mathbb{R}^3$,
$$
f:[a, b]^2 \rightarrow S^2, \quad\left(\omega_1, \omega_2\right) \mapsto \frac{1}{\left(e^{2 \omega_1}+1\right)}\left(\begin{array}{c}
2 e^{\omega_1} \cos \left(\omega_2\right) \\
2 e^{\omega_1} \sin \left(\omega_2\right) \\
e^{2 \omega_1}-1
\end{array}\right)
$$
The function produces the normal field of the helicoid and is obtained from the stereographic projection of $\left(\omega_1, \omega_2\right) \mapsto \exp \left(\omega_1+i \omega_2\right) \in \mathbb{C}$ onto $S^2$.
The partial derivatives of $f$ are
$$
\begin{aligned}
& \partial_1 f\left(\omega_1, \omega_2\right)=\frac{-2 e^{2 \omega_1}}{\left(e^{2 \omega_1}+1\right)^2}\left(\begin{array}{c}
2 e^{\omega_1} \cos \left(\omega_2\right) \\
2 e^{\omega_1} \sin \left(\omega_2\right) \\
e^{2 \omega_1}-1
\end{array}\right)+\frac{2}{e^{2 \omega_1}+1}\left(\begin{array}{c}
e^{\omega_1} \cos \left(\omega_2\right) \\
e^{\omega_1} \sin \left(\omega_2\right) \\
e^{2 \omega_1}
\end{array}\right) \\
& \partial_2 f\left(\omega_1, \omega_2\right)=\frac{1}{e^{2 \omega_1}+1}\left(\begin{array}{c}
-2 e^{\omega_1} \sin \left(\omega_2\right) \\
2 e^{\omega_1} \cos \left(\omega_2\right) \\
0
\end{array}\right)
\end{aligned}
$$

We use a uniform Cartesian grid of \(8\times 8\) points in the parameter domain \(\Omega\), resulting in $k = 64$ sample locations $\omega_j$. At each sample we evaluate the function value $p_j = f(\omega_j)\in S^2$ and the two partial derivatives $v_j^i = \partial_i f(\omega_j)\in T_{p_j}S^2$ ($i=1,2$). Table~\ref{tab:sphere} and Table~\ref{tab:mvga-noder-sphere} display the performance of THI and THI+Arnoldi for the cases with and without derivative data, respectively. We observe that considerably better performance is achieved with the $G$-Arnoldi orthogonalization.

\begin{table}[!ht]
\centering
\caption{THI (Kriging) vs THI+Arnoldi on sphere with 64 sampling points and polynomial total degree 15.}
\label{tab:sphere}
\begin{tabular}{l S[table-format=1.4e-1] S[table-format=1.4e-1]}
\toprule
Metric & {THI (Kriging)} & {THI+Arnoldi} \\
\midrule
Offline time (s)              & 1.2368e+01 & 6.2331e+00 \\
Online time (s per query)     & 9.7652e-02 & 5.9104e-01 \\
Avg error                     & 5.7095e-06 & 4.6558e-10 \\
Max error                     & 2.0834e-05 & 3.4082e-09 \\
FD error d1f                  & 1.3808e-04 & 7.8206e-10 \\
FD error d2f                  & 1.3269e-04 & 8.9595e-10 \\
\bottomrule
\end{tabular}
\end{table}

\begin{table}[h]
\centering
\caption{Performance of MV+G-A without derivative data (sphere S$^2$) by 64 sampling points and polynomial total degree 15.}
\label{tab:mvga-noder-sphere}
\begin{tabular}{l r}
\toprule
Metric & Value \\
\midrule
Offline time (s)              & $2.6230 \times 10^{+1}$ \\
Online time per query (s)     & $2.0807 \times 10^{-1}$ \\
Avg error (function)          & $7.0428 \times 10^{-6}$ \\
Max error (function)          & $2.4243 \times 10^{-5}$ \\
FD error $\partial_1 f$       & $1.9167 \times 10^{-4}$ \\
FD error $\partial_2 f$       & $5.3635 \times 10^{-5}$ \\
\bottomrule
\end{tabular}
\end{table}

\subsubsection{Chebyshev sampling plan.}
As a slightly more complicated test function, we modify the helicoid Gauß map by doubling the frequency in the second variable:
\[
f:[a,b]^2\to S^2,\quad (\omega_1,\omega_2)\mapsto
\frac{1}{e^{2\omega_1}+1}
\begin{pmatrix}
2e^{\omega_1}\cos(2\omega_2)\\[2pt]
2e^{\omega_1}\sin(2\omega_2)\\[2pt]
e^{2\omega_1}-1
\end{pmatrix}.
\]
The partial derivatives are given by
\[
\begin{aligned}
\partial_1 f(\omega_1,\omega_2)&=
\frac{-2e^{2\omega_1}}{(e^{2\omega_1}+1)^2}
\begin{pmatrix}
2e^{\omega_1}\cos(2\omega_2)\\
2e^{\omega_1}\sin(2\omega_2)\\
e^{2\omega_1}-1
\end{pmatrix}
+\frac{2}{e^{2\omega_1}+1}
\begin{pmatrix}
e^{\omega_1}\cos(2\omega_2)\\
e^{\omega_1}\sin(2\omega_2)\\
e^{2\omega_1}
\end{pmatrix},\\[6pt]
\partial_2 f(\omega_1,\omega_2)&=
\frac{1}{e^{2\omega_1}+1}
\begin{pmatrix}
-4e^{\omega_1}\sin(2\omega_2)\\
4e^{\omega_1}\cos(2\omega_2)\\
0
\end{pmatrix}.
\end{aligned}
\]

We sample the parameter domain $[a,b]^2$ using a Chebyshev grid of the second kind.  
For a given number of points $N$ per dimension, the one-dimensional nodes are
\[
x_j = \frac{a+b}{2} + \frac{b-a}{2}\cos\!\left(\frac{j\pi}{N+1}\right),\qquad j=1,\dots,N.
\]
The two-dimensional grid is then formed by the Cartesian product of these nodes.  
We take $N=10$ in the experiment, and it gives $k=100$ sample locations $\omega_j$.

\begin{table}[!ht]
\centering
\caption{THI (Kriging) vs THI+Arnoldi on the sphere with $100$ Chebyshev sampling points and polynomial total degree $15$.}
\label{tab:sphere_thi_mvga_full}
\begin{tabular}{l S[table-format=1.4e-1] S[table-format=1.4e-1]}
\toprule
Metric & {THI (Kriging)} & {THI+Arnoldi} \\
\midrule
Offline time (s)              & 5.5417e+01 & 7.0162e-02 \\
Online time (s per query)     & 2.5090e-01 & 3.1461e-03 \\
Avg error                     & 2.5286e-04 & 8.9908e-06 \\
Max error                     & 4.3919e-03 & 1.0069e-04 \\
FD error $\partial_1 f$       & 8.3956e-03 & 5.2096e-05 \\
FD error $\partial_2 f$       & 7.8004e-03 & 5.7229e-05 \\
\bottomrule
\end{tabular}
\end{table}

\begin{table}[!ht]
\centering
\caption{Performance of MV+G-A without derivative data by 100 Chebyshev sampling points and polynomial total degree 15.}
\label{tab:mvga_noder_sphere}
\begin{tabular}{l r}
\toprule
Metric & Value \\
\midrule
Offline time (s)              & $2.8171\times 10^{-2}$ \\
Online time per query (s)     & $1.1330\times 10^{-3}$ \\
Avg error (function)          & $3.3172\times 10^{-4}$ \\
Max error (function)          & $3.7130\times 10^{-3}$ \\
FD error $\partial_1 f$       & $7.3057\times 10^{-3}$ \\
FD error $\partial_2 f$       & $2.3538\times 10^{-3}$ \\
\bottomrule
\end{tabular}
\end{table}

Similar to the uniform grid case, Table~\ref{tab:sphere_thi_mvga_full} and Table~\ref{tab:mvga_noder_sphere} show that the Arnoldi-based Hermite solver (MV+G-A with full derivative data) consistently outperforms classical THI+Kriging in both accuracy and speed on Chebyshev sampling points. On a $10\times 10$ Chebyshev grid, the Arnoldi method achieves an average error of $8.9\times10^{-6}$ and $2.5\times10^{-3}$ seconds per query, compared to $2.5\times10^{-4}$ error and $2.5\times10^{-1}$ seconds for THI+Kriging. Using only function values (MV+G-A without derivative data) also produces reasonable accuracy, with derivative errors roughly one order of magnitude larger, which show the benefit of Hermite data. In summary, the $G$-Arnoldi basis delivers a well-conditioned least-squares problem with fast online evaluation and high accuracy.

\section{Conclusion}

We have introduced an Arnoldi-enhanced tangent space Hermite interpolation (THI) framework for stable and efficient approximation of manifold-valued data. The multivariate $G$-Arnoldi process replaces the traditional Kriging-based interpolant with a discrete orthogonal polynomial basis, avoiding the exponential growth of the condition number even with scattered derivative data.

We established $C^0$ and $C^1$ error bounds for the multivariate Arnoldi approximation. The analysis shows how the total degree of the polynomial basis and the sampling density interact to maintain convergence rates.
Numerical experiments on $SO(3)$ and $S^2$ show that the proposed method gives better accuracy. Compared to Kriging, it reduces both offline construction time and online query latency.

As future work, the idea can be extended to approximation problems on Grassmann and Stiefel manifolds, and to combine adaptive sampling strategies with the $G$-Arnoldi process for functions exhibiting sharp local variations. Moreover, the sampling inequality \eqref{controlsampling2} can also be applied to other approximation problems in Sobolev spaces, including kernel-based and low-rank approximation methods such as those considered in \cite{griebel2025kernel,griebel2023lowrank}.

\backmatter

\bmhead{Supplementary information} Not applicable.

\section*{Declarations}

\textbf{Conflict of interest}: The authors declare no competing interests. \textbf{Data availability}: The code supporting this study is openly available at \url{https://github.com/YuxuanLi18/Hermite-interpolation-of-manifold-data}. \textbf{Author contribution}: Yuxuan Li: Methodology, Software, Writing – original draft; Qiang Niu: Conceptualization, Supervision, Writing – review \& editing; Wubin Zhou: Supervision, Writing – review \& editing.
\begin{appendices}

\end{appendices}

\bibliographystyle{plain}

\end{document}